\documentclass[12pt]{amsart}

\usepackage{amsfonts}
\usepackage{amssymb}
\usepackage{amsmath}
\usepackage{amscd}
\usepackage{amstext}
\usepackage{ifthen}
\usepackage[all]{xy}
\usepackage{enumerate}
\usepackage[pdftex]{color}

\makeatletter
\def\@cite#1#2{{\m@th\upshape\bfseries%
[{#1\if@tempswa{\m@th\upshape\mdseries, #2}\fi}]}}
\makeatother

\newtheorem{theorem}{Theorem}[section]
\newtheorem{lemma}[theorem]{Lemma}
\newtheorem{corollary}[theorem]{Corollary}

\theoremstyle{definition}
\newtheorem{definition}[theorem]{Definition}
\newtheorem{remark}[theorem]{Remark}

\numberwithin{equation}{section}


\newcommand{\bB}{{\mathbb{B}}}
\newcommand{\bC}{{\mathbb{C}}}

\newcommand{\bZ}{{\mathbb{Z}}}

  
  \newcommand{\B}{{\mathcal{B}}}
  \newcommand{\C}{{\mathcal{C}}}

  \newcommand{\F}{{\mathcal{F}}}
  \newcommand{\G}{{\mathcal{G}}}
\renewcommand{\H}{{\mathcal{H}}}

\renewcommand{\L}{{\mathcal{L}}}
  \newcommand{\M}{{\mathcal{M}}}

  \newcommand{\T}{{\mathcal{T}}}
  \newcommand{\U}{{\mathcal{U}}}
  \newcommand{\V}{{\mathcal{V}}}
  
  \newcommand{\X}{{\mathcal{X}}}


\renewcommand{\phi}{\varphi}
\newcommand{\upchi}{{\raise.35ex\hbox{\ensuremath{\chi}}}}


\newcommand{\ft}{{\mathfrak{t}}}


\newcommand{\rC}{{\mathrm{C}}}


\newcommand{\qand}{\quad\text{and}\quad}

\newcommand{\qfor}{\quad\text{for}\quad}

\newcommand{\AND}{\text{ and }}
\newcommand{\FOR}{\text{ for }}

\newcommand{\OR}{\text{ or }}


\newcommand{\id}{{\operatorname{id}}}

\newcommand{\spn}{\operatorname{span}}

\newcommand{\re}{\operatorname{Re}}

\newcommand{\Repn}{\operatorname{Rep}_{\T_2}}
\newcommand{\Rep}{\operatorname{Rep}}

\newcommand{\supp}{\operatorname{supp}}

\newcommand{\bsl}{\setminus}

\newcommand{\ip}[1]{\langle #1 \rangle}
\newcommand{\mt}{\varnothing}
\newcommand{\ol}{\overline}


\begin{document}

\title[Topological graphs]{Isomorphisms of tensor algebras of topological graphs}

\author[K. R. Davidson]{Kenneth R. Davidson}
\address{Pure Mathematics Department\\
University of Waterloo\\
Waterloo, ON\; N2L--3G1\\
CANADA}
\email{krdavids@uwaterloo.ca}

\author[J. Roydor]{Jean Roydor}
\address{D\'epartement de Math\'ematiques. \\
Universit\'{e} d'Orleans \\
45067 Orleans Cedex 2 \\
FRANCE}
\email{jean.roydor@univ-orleans.fr}

\subjclass[2000]{47L55, 47L40}
\keywords{topological graph, C*-correspondence, piecewise conjugate}
\thanks{First author partially supported by an NSERC grant.}

\begin{abstract}
We show that if two tensor algebras of topological graphs are algebraically
isomorphic, then the graphs are locally conjugate.  Conversely, if the base
space is at most one dimensional and the edge space is compact,
then locally conjugate topological graphs yield completely
isometrically isomorphic tensor algebras.
\end{abstract}

\date{}
\maketitle

\section{Introduction}

In \cite{K}, Katsura defined the notion of a topological graph
which unifies countable directed graphs and dynamical systems.
This enabled him to introduce a new class of C*-algebras
generalizing the classes of graph C*-algebras and
homeomorphism C*-algebras.
These C*-algebras arise as Cuntz-Pimsner algebras over
C*-correspondences, a construction due to Pimsner \cite{Pim}.
Muhly and Solel have a long series of papers which consider the
non-selfadjoint tensor algebra associated to a C*-corrsepondence
(see \cite{MS1,MS2} for a start) which develop a detailed theory,
making this a very tractable, yet rich, family of operator algebras.

This class of tensor algebras contains two well-known classes of operator
algebras as special cases: tensor algebras of countable directed graphs
and operator algebras associated to dynamical systems in one or several
variables.  One important direction is the attempt to classify these algebras,
and in particular to decide to what extent one can recover the graph or
dynamical system from the operator algebra.  We continue this program in
this paper in the context of topological graphs.

This was successfully done for countable directed graphs, showing that
the graph can be completely recovered from the operator algebra.  This
was accomplished in increasing generality by Kribs and Power \cite{KP},
Solel \cite{Sol} and Katsoulis and Kribs \cite{KK}.

Operator algebras of dynamical systems has a very long history.
The story for nonself-adjoint algebras begins with the work of Arveson
\cite{Ar,AJ} where an operator algebra is associated to a dynamical system
determined by a single map from a compact Hausdorff space to itself
subject to certain additional hypotheses.
Peters \cite{Pet} showed how to define an abstract operator algebra,
known as a semicrossed product, to any such discrete dynamical system,
and classified them with weaker constraints.  Further results by
Hadwin and Hoover \cite{HH}, Power \cite{Pow} and Davidson and
Katsoulis \cite{DK2} eventually showed that the dynamical system can be
completely recovered from the operator algebra up to conjugacy for any
proper continuous map of a locally compact Hausdorff space to itself.

In the case of multivariable dynamics, Davidson and Katsoulis \cite{DK}
define a universal operator algebra that encodes $n$ proper continuous
maps of a locally compact Hausdorff space $X$ to itself.  This algebra is
the tensor algebra of a C*-correspondence.  It is shown that isomorphic
algebras have dynamical systems which are piecewise conjugate, which
is a local version of conjugacy that allows different permutations of the
maps in different neighbourhoods.
A strong converse, showing that piecewise conjugate systems determine
completely isometrically isomorphic algebras, was established if
$\dim X \le 1$ or if the number of maps was at most three.

In the case of topological graphs, it is necessary to formulate an appropriate
notion of piecewise conjugacy, that we call local conjugacy.
In this paper, we show that if two tensor algebras of topological graphs
are algebraically isomorphic, then the topological graphs are locally conjugate.
The proof of this result uses the same ingredients as in the multivariable
dynamics case: characters and nest representations of the tensor algebras.
However the converse proves to be considerably harder. This is because
the topological graphs that arise in \cite{DK} are topologically much simpler
than the general case. We prove that the strong converse holds when the
edge space is compact and the base space has dimension at most one.

\section{Preliminaries}

A topological graph $E=(E^0,E^1,r,s)$ consists of two locally
compact spaces $E^0$, $E^1$, a continuous proper mapping
$r:E^1 \to E^0$ and a local homeomorphism $s:E^1 \to E^0$.
The set $E^0$ is called the base space and $E^1$ the edge space.
A topological graph is called compact if both of these spaces are
compact Hausdorff spaces.
An important class of examples of topological graphs are
multivariable dynamical systems.
More precisely, if $X$ is a locally compact Hausdorff space
endowed with $n$ proper continuous selfmaps
$(\sigma_1,\dots,\sigma_n)$, then let
$E^0=X$, $E^1=\{1,\dots,n\} \times X$,
let $s$ be the natural projection onto $E^1$ onto $E^0$,
and $r=\cup_i \sigma_i$ the union the maps $\sigma_i$.
This defines a topological graph.
More examples of topological graphs can be found in \cite{K2}.

Following \cite{K}, we consider the associated C*-correspondence
$\X(E)$ over $\rC_0(E^0)$.
We recall that the right and left actions of $\rC_0(E^0)$ on
$\rC_c(E^1)$ are given by
\[
 (f \cdot x \cdot g)(e)=f(r(e))x(e)g(s(e))
\]
for $x \in \rC_c(E^1)$, $f,g \in \rC_0(E^0)$ and $e \in E^1$.
The inner product is defined for $x,y \in \rC_c(E^1)$ by
\[
 \ip{ x, y } (v) = \sum_{ e \in s^{-1}(v)} \ol{x(e)}y(e)  \qfor v \in E^0.
\]
Finally, $\X(E)$ denotes the completion of $\rC_c(E^1)$ for the norm
\[
 \Vert x \Vert=\sup_{v \in E^0} \langle x, x \rangle (v)^{1/2} .
\]
See \cite{R} for more details.

Looking at the C*-correspondence in \cite{DK}, it is easy to see
that the tensor algebra of a multivariable dynamical system
$(X;\sigma_1,\dots,\sigma_n)$ coincides with the tensor algebra
of the topological graph described above.

In general, the tensor algebra of a C*-correspondence can be defined
by a certain universal property \cite{MS1}.
In our case, this tensor algebra is the universal algebra for pairs
of representations $(\pi,t,\H)$ where $\pi:\rC_0(E^0) \to \B(\H)$ is a
$*$-homomorphism and $t:\X(E) \to \B(\H)$ is a completely
contractive map satisfying \textit{covariance relations}
\[
 t(f \cdot x)=\pi(f)t(x) \qand t(x \cdot f)=t(x)\pi (f).
\]
More precisely, $\T (E)_+$ is the unique operator algebra containing
$\rC_0(E^0)$ and $\X(E)$ such that every such pair of representations
$(\pi,t,\H)$ extends uniquely to a completely contractive representation
of $\T (E)_+$.
We will use repeatedly this universal property to construct characters and nest representations of $\T (E)_+$.

The tensor algebra associated to a C*-correspondence can also be
realized concretely as a subalgebra of the adjointable operators on
the related Fock space (see \cite{K}).
We recall briefly this construction in our case.
Let $E=(E^0,E^1,r,s)$ be a topological graph and let $\X(E)$ be
the associated C*-correspondence over $\rC_0(E^0)$.
We define
\[
 \F(E) = \rC_0(E^0) \oplus \bigoplus_{n \geq 1} \X(E)^n
\]
to be the Fock space of $E$,
where $\X(E)^n$ denotes the $n$-fold tensor product of $\X(E)$.
Let $\L(\F(E))$ be the C*-algebra of adjointable operators on $\F(E)$.
The Fock representation $(\pi_\infty,t_\infty)$ is defined in the following way:
for $f \in \rC_0(E^0)$ and $x_1 \otimes \dots \otimes x_n \in \X(E)^n$, set
\[
 \pi_\infty(f)(x_1 \otimes \dots \otimes x_n)=(f \cdot x_1) \otimes \dots \otimes x_n .
\]
This is a $*$-isomorphism from $\rC_0(E^0)$ into $\L(\F(E))$.
Also for $x \in \X(E)$ and $x_1 \otimes \dots \otimes x_n \in \X(E)^n$, let
\[
 t_\infty(x)(x_1 \otimes \dots \otimes x_n) =
 x \otimes x_1 \otimes \dots \otimes x_n .
\]
This is a completely contractive mapping from $\X(E)$ into $\L(\F(E))$.
It is not difficult to check that the pair $(\pi_\infty,t_\infty)$ satisfies the
covariance relations.
The tensor algebra associated to the topological graph $E$ is
\[
 \T (E)_+ = \ol{\spn} \{ t_\infty ^n(x) : n \geq 0,~x \in \X(E)^n \} \subset \L(\F(E))
\]
with the conventions $\X(E)^0=\rC_0(E^0)$, $t_\infty ^0=\pi_\infty$
and $t_\infty^n$ denotes the $n$-fold tensor product of $t_\infty$.

\section{Identifying the base space}

Following \cite{DK}, we wish to identify the character space $\M(E)$ of $\T (E)_+$
and the maximal analytic subsets.
Recall that an analytic subset of $\M(E)$ is the image of an injective function $\Phi$
from a domain $\Omega$ in $\bC^k$ into $\M(E)$ such that for each $A \in \T(E)_+$,
the function $\Phi(z)(A)$ is analytic on $\Omega$.  It is a maximal analytic set
if it is not contained in any larger analytic set.

Since $\T (E)_+$ contains $\rC_0(E^0)$, the restriction of any character to this
subalgebra yields a point evaluation $\delta_v$ for some $v\in E^0$.
Let $\M(E)_v$ denote those characters which extend $\delta_v$.
Recall that there is a canonical conditional expectation $\Xi$ of $\T (E)_+$ onto
$\rC_0(E^0)$ obtained by integrating the gauge automorphisms.
This map is a homomorphism, and thus $\theta_{v,0} := \delta_v \Xi$ is always
a character.  So $\M(E)_v$ is never empty.

For each $v \in E^0$, we denote
\[
E^1_{v} = \{e \in E^1 : r(e)=v = s(e) \}.
\]
This is a compact set because $r$ is proper, and is
discrete because $s$ is a local homeomorphism.
Hence it is finite.

\begin{lemma} \label{L:1}
Let $x \in \X(E)$ such that $\supp(x) \cap E^1_{v}=\mt$.
Then $\theta(x)=0$ for every $\theta \in \M(E)_v$.
\end{lemma}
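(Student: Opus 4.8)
The goal is to show that a character $\theta$ extending $\delta_v$ annihilates any $x\in\X(E)$ whose support misses $E^1_v$. The natural strategy is to exploit the module structure: $\X(E)$ is a bimodule over $\rC_0(E^0)$, and $\theta$ restricted to $\rC_0(E^0)$ is $\delta_v$, which is multiplicative. The covariance relations give $\theta(f\cdot x)=\theta(f)\theta(x)=f(v)\theta(x)$ and $\theta(x\cdot f)=\theta(x)f(v)$ for $f\in\rC_0(E^0)$. So if I can write $x$ (or approximate it) as $f\cdot x'$ or $x'\cdot f$ with $f(v)=0$, I am done. The key topological input is that $\supp(x)$ is a compact subset of $E^1$ disjoint from the finite set $E^1_v$.

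\textbf{Key steps.} First, reduce to $x\in\rC_c(E^1)$ by density and continuity of $\theta$ (a character is automatically contractive on a Banach algebra, hence norm-continuous, and $\rC_c(E^1)$ is dense in $\X(E)$). Second, let $K=\supp(x)$, a compact set. For each $e\in K$, since $e\notin E^1_v$, either $r(e)\neq v$ or $s(e)\neq v$. Using this, cover $K$ by finitely many open sets on each of which either $r$ or $s$ avoids a fixed neighbourhood of $v$; concretely, partition the issue by choosing, via Urysohn/partition-of-unity arguments on $E^0$, functions $g\in\rC_0(E^0)$ with $g(v)=0$ but $g\equiv 1$ near $r(K')$, or near $s(K')$, for appropriate pieces $K'$ of $K$. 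Third, using a partition of unity $\{\phi_i\}$ on $K$ subordinate to this cover, write $x=\sum_i \phi_i x$ where each $\phi_i x$ satisfies either $r(\supp(\phi_i x))\not\ni v$ or $s(\supp(\phi_i x))\not\ni v$; then $\phi_i x = g_i\cdot(\phi_i x)$ or $\phi_i x=(\phi_i x)\cdot g_i$ with $g_i(v)=0$. Apply $\theta$: each term vanishes by the covariance relation, so $\theta(x)=0$.

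\textbf{Main obstacle.} The delicate point is handling the two conditions $r(e)=v$ and $s(e)=v$ simultaneously, since $E^1_v$ is their intersection and a single point of $K$ may have $r(e)=v$ while $s(e)$ is close to $v$ (or vice versa). One cannot in general separate $r(K)$ from $v$ or $s(K)$ from $v$ globally. The resolution is the local argument above: near any $e\in K\setminus E^1_v$ at least one of $r,s$ is bounded away from $v$ on a small neighbourhood, so a partition of unity on $K$ lets one treat each piece with the appropriate one of the two covariance relations. A minor technical point is ensuring the cutoff functions $g_i$ can be chosen in $\rC_0(E^0)$ (not merely $\rC_b$), which is automatic since $E^0$ is locally compact Hausdorff and we only need them supported near compact sets; and that multiplying by the $\phi_i\in\rC_c(E^1)$ keeps us inside $\rC_c(E^1)\subset\X(E)$, which is clear. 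Once the covering is set up correctly, the computation is routine.
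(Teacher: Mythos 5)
Your argument is correct and rests on the same mechanism as the paper's proof: split $x$ into finitely many pieces, each of which can be written as $g\cdot y$ or $y\cdot g$ with $g(v)=0$, then apply the covariance relations together with the multiplicativity of $\theta|_{\rC_0(E^0)}=\delta_v$. The paper does this with just two pieces --- $x_1=g_1\cdot x$, where $g_1\circ r\equiv 1$ on a neighbourhood of the finite set $\supp(x)\cap s^{-1}(v)$ and $g_1(v)=0$, and $x_2=x-x_1$, whose support then misses $s^{-1}(v)$ --- rather than a general partition of unity, which is only a cosmetic difference; the one point to tighten is that your density reduction should use approximants that still vanish near $E^1_v$ (e.g.\ multiply them by a cutoff equal to $0$ near $E^1_v$ and $1$ off a neighbourhood on which $x$ already vanishes).
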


\begin{proof}
Note that $\supp(x) \cap s^{-1}(v)$ is finite; and so its image under
$r$ is compact and disjoint from $v$.
Thus there is a neighbourhood $U_1$ of $\supp(x) \cap s^{-1}(v)$
with compact closure so that $r(\ol{U_1})$ is disjoint from $v$.
Therefore there is a positive function $g_1$ in $\rC_0(E^0)$
so that $g_1r|_{\ol{U_1}} = 1$ and $g_1(v)=0$.
Let $x_1 = g_1\cdot x$ and $x_2 = x-x_1$.

Observe that $\supp(x_2) \cap U_1 = \mt$.
So $s(\supp(x_2))$ is compact and disjoint from $v$.
Therefore there is a positive function $g_2$ in $\rC_0(E^0)$
so that $g_2s|_{\supp(x_2)} = 1$ and $g_2(v)=0$.
Note that $x_2 = x_2 \cdot g_2$.

For any $\theta \in \M(E)_v$,
\[
 \theta(x)=\theta(g_1 \cdot x_1) + \theta(x_2\cdot g_2) =
 \delta_v(g_1)\theta(x_1) + \theta(x_2) \delta_v(g_2) = 0 . \qedhere
\]
\end{proof}

\begin{theorem}\label{T:characters}
For $v\in E^0$, let $n$ denote the (finite) cardinality of $E^1_{v}$.
If $n=0$, then $\M(E)_v=\{ \theta_{v,0} \}$.
If $n \geq 1$, then $\M(E)_v$ is homeomorphic to $\ol{\bB}_n$,
the closed unit ball of $\bC^n$, and the image of the open ball
$\bB_n$ is a maximal analytic set.
Moreover, these are the only maximal analytic sets.
\end{theorem}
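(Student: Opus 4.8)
My plan is to fix $v \in E^0$ with $E^1_v = \{e_1, \dots, e_n\}$ and use Lemma~\ref{L:1} to reduce the study of characters in $\M(E)_v$ to a local computation near the fiber $E^1_v$. The idea is that any $\theta \in \M(E)_v$ is determined by its values on $\rC_0(E^0)$ (which gives $\delta_v$) together with the finitely many scalars $\lambda_j := \theta(x)$ for suitable $x$ peaked at $e_j$. More precisely, since $s$ is a local homeomorphism, choose disjoint open neighbourhoods $V_j$ of $e_j$ on which $s$ and $r$ are homeomorphisms onto open neighbourhoods of $v$, and pick $x_j \in \rC_c(E^1)$ supported in $V_j$ with $x_j(e_j) = 1$; after using the left/right actions of $\rC_0(E^0)$ we may normalize so that $\ip{x_j, x_j}(v) = 1$ and the $x_j$ are pairwise orthogonal in $\X(E)$. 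For $x \in \X(E)$, write $x = \sum_j c_j(x) x_j + x'$ where $\supp(x') \cap E^1_v = \mt$; then Lemma~\ref{L:1} gives $\theta(x) = \sum_j c_j(x) \lambda_j$. So $\theta \mapsto (\lambda_1, \dots, \lambda_n)$ is injective, and the only remaining questions are which tuples occur and what the analytic structure is.

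**Surjectivity onto the closed ball.** To see that every $\lambda = (\lambda_1, \dots, \lambda_n)$ with $\|\lambda\|_2 \le 1$ is attained, I would build a representation $(\pi, t)$ of the C*-correspondence on a Hilbert space and use the universal property of $\T(E)_+$. The natural candidate: let $\pi = \delta_v \otimes \id_{\H}$ act on a one- or infinite-dimensional space, define $t(x) = \big(\sum_j c_j(x) \lambda_j\big) P$ for an appropriate contraction/projection $P$, and check the covariance relations $t(f\cdot x) = \pi(f) t(x)$ and $t(x \cdot f) = t(x)\pi(f)$ hold because $f \cdot x_j$ and $x_j \cdot f$ pick up the factor $f(v)$ modulo terms killed by Lemma~\ref{L:1}'s mechanism. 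The complete contractivity of $t$ forces $\|\lambda\|_2 \le 1$: evaluating $t$ on $x = \sum c_j x_j$ and using that $\sum_j c_j x_j$ has $\X(E)$-norm $(\sum |c_j|^2)^{1/2}$ near $v$ shows $|\sum c_j \lambda_j| \le (\sum|c_j|^2)^{1/2}$, i.e. $\|\lambda\|_2 \le 1$ by duality. The case $n = 0$ is immediate from Lemma~\ref{L:1}: then $\theta(x) = 0$ for all $x \in \X(E)$, so $\theta$ is the canonical character $\delta_v \Xi = \theta_{v,0}$.

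**Analytic structure and maximality.** For the analytic set claim, note that $\Phi : \bB_n \to \M(E)$ sending $\lambda$ to the character $\theta_\lambda$ just constructed has the property that $\lambda \mapsto \theta_\lambda(A)$ is a polynomial in $\lambda$ (hence analytic) for $A$ in the dense subalgebra $\spn\{t_\infty^m(\xi)\}$, and extends to analytic functions by uniform limits; so $\Phi(\bB_n)$ is analytic. For maximality and the "only" clause, I would argue: if $\mathcal{A}$ is any analytic set meeting $\M(E)_v$, then composing $\Phi$'s parametrization with the base-point map $\M(E) \to E^0$ shows the whole analytic set must lie over a single point $v$ (the map to $E^0$ is locally constant on connected analytic sets since $E^0$ is totally disconnected in the relevant sense—actually, more carefully, because the $\rC_0(E^0)$-valued coordinates are analytic and $E^0$-valued, forcing them constant), whence $\mathcal{A} \subseteq \M(E)_v \cong \ol{\bB}_n$, and an analytic subset of $\ol{\bB}_n$ strictly containing $\bB_n$ is impossible by the maximum modulus principle.

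**The main obstacle.** The hardest step is the surjectivity together with getting the boundary behaviour exactly right: one must show precisely that the attainable tuples form the \emph{closed} ball and not something larger or smaller, which requires carefully computing the $\X(E)$-norm of combinations $\sum_j c_j x_j$ in a neighbourhood of $v$ — this uses that $s$ restricted to each $V_j$ is injective so the $x_j$ have disjoint supports under $s$, making $\ip{\sum c_j x_j, \sum c_j x_j}(v) = \sum |c_j|^2$. Equally delicate is constructing the representation realizing a boundary point $\|\lambda\|_2 = 1$: here $t$ need not be a nice multiple of a projection, and one likely passes through the Fock representation or a direct-sum construction. I also anticipate that rigorously proving the "only maximal analytic sets" statement — ruling out analytic sets that wander between different fibers $\M(E)_v$ — requires a genuine argument that analytic functions into the totally-disconnected-in-this-sense base space $E^0$ are locally constant, which deserves care.
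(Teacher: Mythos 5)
Your overall strategy is the paper's: coordinatize $\M(E)_v$ by evaluating characters on functions peaked at the finitely many edges in $E^1_v$, use Lemma~\ref{L:1} for injectivity, the universal property for surjectivity, and the same analyticity argument. However, two steps as written do not go through. First, the decomposition $x=\sum_j c_j(x)x_j+x'$ with \emph{scalar} coefficients does not let you apply Lemma~\ref{L:1}: the remainder $x-\sum_j x(e_j)x_j$ merely vanishes \emph{at} each $e_j$, whereas the lemma requires $\supp(x')\cap E^1_v=\mt$, i.e.\ vanishing on a neighbourhood of each $e_j$. The fix is to use the module structure rather than scalar coefficients: take a partition of unity $\{p,p_1,\dots,p_n\}$ subordinate to a cover by the $V_j$ and a set missing $E^1_v$, write $xp_j=\ft_j\cdot b_j$ with $b_j\in\rC_0(E^0)$ and $b_j(v)=x(e_j)$, and conclude $\theta(xp_j)=\theta(\ft_j)\,b_j(v)$ while Lemma~\ref{L:1} kills $\theta(xp)$. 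You gesture at ``using the left/right actions,'' but the argument you actually state is the one that fails.

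Second, your surjectivity argument is left open exactly where you flag the ``main obstacle,'' and the obstacle is illusory: no Fock-space or direct-sum construction is needed for boundary points. For every $z\in\ol{\bB}_n$, boundary included, the one-dimensional pair $(\delta_v,t_z,\bC)$ with $t_z(x)=\sum_i z_i x(e_i)$ is covariant (because $r(e_i)=s(e_i)=v$) and contractive, since $|t_z(x)|\le\|z\|_2\big(\sum_i|x(e_i)|^2\big)^{1/2}\le\|z\|_2\,\ip{x,x}(v)^{1/2}\le\|x\|$, and a contractive linear functional into $\bC$ is automatically completely contractive; the universal property then produces $\theta_{v,z}$ for the whole closed ball in one stroke. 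A smaller correction: in the maximality argument, do not appeal to $E^0$ being ``totally disconnected in the relevant sense''---it need not be; the correct observation (which you do eventually reach) is that for real-valued $f\in\rC_0(E^0)$ the analytic function $z\mapsto\Theta(z)(f)$ is real-valued on a domain, hence constant, so the base point is constant along any analytic set.
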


\begin{proof}
Let $E^1_{v}=\{e_i : 1 \le i \le n \}$.
Since $E^1_v$ is a finite set, we can select disjoint neighbourhoods
$e_i \in V_i \subset \ol{V_i} \subset U_i$ of $e_i$ such that $\ol{U_i}$ is compact and
such that $s$ maps each $U_i$ homeomorphically onto a neighbourhood
$U$ of $v$.
Pick contractions $\ft_i \in \rC_c(E^1) \subset \X(E)$ with compact support
in $U_i$ such that $\ft_i|_{V_i} =1$.
Observe that the row operator $\ft = \big[ \ft_1\ \dots\  \ft_n \big]$ is a contraction.
Consider the map $\kappa$ defined on $\M(E)_v$ by
\[ \kappa(\theta)=(\theta(\ft_1),\dots,\theta(\ft_n)).\]
As a character $\theta$ is completely contractive, the range of $\kappa$
is contained in $\ol{\bB}_n$.

To prove that $\kappa$ is surjective, let $z=(z_1,\dots, z_n) \in \ol{\bB}_n$,
and consider the complete contraction $t_z$ defined on $\X(E)$ by
\[ t_z(x)=\sum_{i=1}^n z_i x(e_i). \]
Then $(\delta_v,t_z,\bC)$ is covariant representation.
By the universal property of $\T (E)_+$, it extends uniquely to a
character $\theta_{v,z} \in \M(E)_v$.
Clearly $\kappa(\theta_{v,z})=z$.

For the injectivity, let $\theta_1, \theta_2 \in \M(E)_v$ such that
$\kappa(\theta_1)=\kappa(\theta_2)$.
As $\T (E)_+$ is generated by $\rC_0(E^0)$ and $\X(E)$,
it suffices to prove that $\theta_1$ and $\theta_2$ agree on $\X(E)$.
Let $x \in \X(E)$ and pick an open set $U$ of $E^1$ with compact closure such that
$E^1_v \cap \ol{U}=\mt$ and $\{U, V_0,V_1,\dots, V_n\}$
forms an open cover of $\supp(x)$.
Let $\{ p, p_i : 1 \le i \le n\}$ be a partition of the unity in
$\rC_c(E^1) \subset \X(E)$ associated to this open cover.
Then $x = xp + xp_1  + \cdots + xp_n$.
By Lemma \ref{L:1},
\[ \theta_1(xp)=\theta_2(xp)=0. \]
Moreover for each $i \ge 1$, $\supp(xp_i) \subset V_i$ so
$xp_i=\ft_i \cdot b_i$, where $b_i$ is a function in $\rC_0(E^0)$
extending $xp_i \circ s|_{V_i}^{-1}$.
Hence
\[
 \theta_1(xp_i)=\theta_1(\ft_i)b_i(v)=\theta_2(\ft_i)\theta_2(b_i)=\theta_2(xp_i) .
\]
This is true for each $i$, so $\theta_1=\theta_2$.

Thus $\kappa$ is a bijection.
We denote the character $\kappa^{-1}(z)$ as $\theta_{v,z}$.
Now for $x \in \X(E)$, $\theta_{v,z}(x) = t_z(x)$ is a linear function.
Thus the extension to $\sum_{k\ge0} t^n_\infty(\X(E)^n)$ takes polynomial values.
A sequence of such elements converging to $T\in \T(E)_+$ converges uniformly
on $\M(E)$ because characters are (completely) contractive.
It follows that the function $\hat T(z) := \theta_{v,z}(T)$ is analytic on the
relative interior $\M^0_v$ of $\M_v$, which is identified with the open ball $\bB_n$.
Thus this is an analytic set.

Finally if $U$ is an analytic subset of $\M(E)$, then the restriction to $\rC_0(E^0)$
is analytic, which forces it to lie in some $\M_v$.  Thus it is clear that it is a subset of
$\M^0_v$.  So we have identified the maximal analytic sets.
\end{proof}

As in \cite{DK}, we observe that the quotient space of $\M(E)$ obtained by
mapping the closure of each maximal analytic set to a point is homeomorphic to $E^0$.
Hence we deduce the following important corollary.

\begin{corollary}\label{C:base space}
The character space $\M (E)$ determines $E^0$ up to homeomorphism.
\end{corollary}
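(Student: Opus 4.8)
The plan is to exhibit $E^0$ explicitly as a quotient of $\M(E)$ and then check that the equivalence relation being collapsed---membership in a common fibre over $E^0$---is precisely the one that can be read off intrinsically from the maximal analytic sets identified in Theorem~\ref{T:characters}. This is what gives the corollary teeth: an algebra isomorphism $\phi\colon\T(E)_+\to\T(F)_+$ induces a weak-$*$ homeomorphism $\phi^*\colon\M(F)\to\M(E)$ (no continuity of $\phi$ needed, since the weak-$*$ topology is exactly the one making all evaluations continuous) that carries maximal analytic sets onto maximal analytic sets, hence carries this quotient onto that quotient.

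First I would introduce the restriction map $q\colon\M(E)\to E^0$, $q(\theta)=v$ where $\theta|_{\rC_0(E^0)}=\delta_v$, using the Gelfand identification of the character space of $\rC_0(E^0)$ with $E^0$ (legitimate since $E^0$ is locally compact Hausdorff). Because $\theta\mapsto\theta(T)$ is weak-$*$ continuous for every $T\in\T(E)_+$, the map $q$ is continuous, and it is surjective since $\theta_{v,0}\in\M(E)_v$ for each $v$. The key extra point is that $v\mapsto\theta_{v,0}=\delta_v\Xi$ is a continuous section of $q$: indeed $\theta_{v,0}(T)=\Xi(T)(v)$ depends continuously on $v$ as $\Xi(T)\in\rC_0(E^0)$. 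A continuous surjection admitting a continuous right inverse is automatically a quotient map---if $q^{-1}(S)$ is open then $S$, being its preimage under the section, is open---so $E^0$ is homeomorphic to $\M(E)$ with each fibre $\M(E)_v$ collapsed to a point.

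It then remains to recognise the fibres of $q$ inside $\M(E)$. Each $\M(E)_v=q^{-1}(v)$ is closed. When $E^1_v\ne\mt$, Theorem~\ref{T:characters} identifies $\M(E)_v$ with $\ol{\bB}_n$ via $z\mapsto\theta_{v,z}$, and every boundary character $\theta_{v,w}$ with $\|w\|=1$ is a weak-$*$ limit of $\theta_{v,z_k}$ with $z_k\to w$ inside $\bB_n$: on the dense subalgebra $\spn\{t_\infty^n(x):n\ge0,\ x\in\X(E)^n\}$ the value $\theta_{v,z}$ is a polynomial in $z$, and since characters are contractive this forces $\theta_{v,z_k}(T)\to\theta_{v,w}(T)$ for all $T\in\T(E)_+$. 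Hence $\M(E)_v$ is exactly the closure of the maximal analytic set $\{\theta_{v,z}:z\in\bB_n\}$. When $E^1_v=\mt$, the fibre is the singleton $\{\theta_{v,0}\}$, which lies in no maximal analytic set and in the closure of none---those closures being the fibres $\M(E)_{v'}$ over points $v'$ with $E^1_{v'}\ne\mt$, each disjoint from $\M(E)_v$. So the partition of $\M(E)$ into $q$-fibres consists precisely of the closures of the maximal analytic sets together with the characters lying in no such closure, and this partition, being defined purely from the algebra and the weak-$*$ topology, recovers $E^0$ up to homeomorphism.

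The step needing genuine care is this last identification, and in particular the fact that it is the \emph{closures} of the maximal analytic sets that must be collapsed: one must verify that the closure of each open-ball image $\{\theta_{v,z}:z\in\bB_n\}$ is the entire fibre $\M(E)_v$, and separately (but easily) that the degenerate fibres over base points carrying no loop are left untouched by the quotient. Everything else is routine Gelfand duality together with the quotient-map criterion above.
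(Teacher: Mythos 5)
Your proof is correct and follows essentially the same route as the paper, which simply observes (citing [DK]) that collapsing the closure of each maximal analytic set in $\M(E)$ to a point yields a space homeomorphic to $E^0$. You supply the details the paper leaves implicit --- the continuous section $v\mapsto\delta_v\Xi$ making $q$ a quotient map, and the identification of the fibres $\M(E)_v$ with the closures of the maximal analytic sets (plus untouched singletons over vertices with no loops) --- all of which check out against Theorem~\ref{T:characters}.
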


\section{Local conjugacy}

For $v,w$ in $E^0$, we denote
\[
 E^1_{v,w}=\{e \in E^1 : r(e)=v \AND s(e)=w \} = r^{-1}(v) \cap s^{-1}(w) .
\]
This is a finite set because $r^{-1}(v)$ is compact, and $s^{-1}(w)$ is a
discrete set.
Denote its cardinality by  $n_{v,w}$.

Recall that a nest representation is a homomorphism of the algebra into
the $2\times 2$ upper triangular matrices with respect to a basis $e_1,e_2$
such that the only proper invariant subspace is $\bC e_1$.
The two diagonal entries are characters, and the $1,2$ entry is a derivation.
Let $\Repn(E)$ denote the set of all $2$-dimensional nest representations of
$\T (E)_+$.
Let $\Rep_{v,w}(E)$ be the set of $2$-dimensional nest representations of
$\T (E)_+$ such that the $1,1$ entry belongs to $\M(E)_v$
and the $2,2$ entry belongs to $\M(E)_w$.
We also denote $\Rep^d_{v,w}(E)$ the subset of $\Rep_{v,w}(E)$ of
representations which are diagonal when restricted to $\rC_0(E^0)$.

\begin{lemma}\label{L:key}
$\Rep_{v,w}(E)$ is non-empty if and only if $E^1_{v,w} \neq \mt$.
Moreover, if $x \in \X(E)$ is such that $\supp(x) \cap E^1_{v,w}= \mt$,
then $\rho(x)$ is diagonal for every $\rho \in \Rep^d_{v,w}(E)$.
\end{lemma}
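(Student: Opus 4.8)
The plan is to prove both directions by mimicking the character construction in Theorem~\ref{T:characters}, but now using the $2 \times 2$ upper triangular target algebra. First I would handle the "only if" direction of the first assertion: if $\rho \in \Rep_{v,w}(E)$ and $E^1_{v,w} = \mt$, I want to derive a contradiction. The diagonal entries of $\rho$ are $\delta_v$ and $\delta_w$ on $\rC_0(E^0)$, and since $v \ne w$ (because $E^1_{v,w}$ empty forces them distinct when there are no loops... actually $v=w$ is possible but then $E^1_{v,v}=E^1_v=\mt$), I can choose $g \in \rC_0(E^0)$ with $g(v)=1$, $g(w)=0$. Then $\rho(g) = \begin{sbmatrix} 1 & c \\ 0 & 0 \end{sbmatrix}$ for some scalar $c$. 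Since $g$ is selfadjoint and lies in the C*-algebra $\rC_0(E^0)$ which $\T(E)_+$ contains, and since $\rho$ restricted to $\rC_0(E^0)$ need not be a $*$-homomorphism... but one can instead use the idempotent: replacing $g$ by a function equal to $1$ near $v$ and $0$ near $w$, and using functions supported near $v$, one shows the $1,2$ entry of $\rho|_{\rC_0(E^0)}$ is actually a point derivation at $v$ valued in $\bC$ (the off-diagonal corner as a bimodule), hence zero since $\rC_0(E^0)$ is commutative and generated by its idempotent-like behavior near $v$ — more precisely, $\delta_v$ has no nonzero point derivations into $\bC$ because $\rC_0(E^0)$ near $v$ is "smooth-free"; concretely if $g(v)=0$ then $g = g_1 g_2$ with $g_i(v)=0$, so the derivation vanishes. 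Thus $\rho|_{\rC_0(E^0)}$ is diagonal, i.e.\ $\rho \in \Rep^d_{v,w}(E)$. Now apply the second assertion (which I prove next) to \emph{every} $x \in \X(E)$: since $E^1_{v,w} = \mt$, every $x$ satisfies $\supp(x) \cap E^1_{v,w} = \mt$ vacuously, so $\rho(x)$ is diagonal for all $x$, whence $\rho$ is diagonal on all of $\T(E)_+$, contradicting that it is a nest representation with nonzero $1,2$ corner.

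For the second assertion, let $\rho \in \Rep^d_{v,w}(E)$ and let $x \in \X(E)$ with $\supp(x) \cap E^1_{v,w} = \mt$. The strategy is exactly the partition-of-unity argument from Lemma~\ref{L:1} and Theorem~\ref{T:characters}, but tracking the off-diagonal entry. Write $\rho = \begin{sbmatrix} \theta_v & D \\ 0 & \theta_w \end{sbmatrix}$ where $D$ is a $(\theta_v,\theta_w)$-derivation. Since $\supp(x) \cap s^{-1}(w)$ is finite and its $r$-image avoids $v$ (as $E^1_{v,w}=\mt$ means no edge in $s^{-1}(w)$ maps to $v$), I split $x = x_1 + x_2$ exactly as in Lemma~\ref{L:1}: $x_1 = g_1 \cdot x$ with $g_1(r(\cdot))=1$ on a neighbourhood of $\supp(x)\cap s^{-1}(w)$ and $g_1(v)=0$; then $x_2 = x - x_1$ has $s(\supp(x_2))$ compact and avoiding $w$, so $x_2 = x_2 \cdot g_2$ with $g_2(s(\cdot))=1$ on $\supp(x_2)$ and $g_2(w)=0$. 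Using the covariance relations and that $\rho$ is diagonal on $\rC_0(E^0)$:
\[
 \rho(x) = \rho(g_1)\,\rho(x_1) + \rho(x_2)\,\rho(g_2)
 = \begin{sbmatrix} 0 & 0 \\ 0 & g_1(w) \end{sbmatrix}\rho(x_1)
 + \rho(x_2)\begin{sbmatrix} g_2(v) & 0 \\ 0 & 0 \end{sbmatrix}.
\]
The first term is lower-block (kills the top row except it's zero in the $1$-$1$ slot, so it's a matrix supported in the bottom row — actually it's $g_1(w)$ times the bottom row of $\rho(x_1)$, which lies in the lower-left/bottom portion, hence its $1,2$ entry is $0$); the second term has image in the left column, so its $1,2$ entry is $0$. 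Wait — I need to be careful: $\begin{sbmatrix}0&0\\0&g_1(w)\end{sbmatrix}\begin{sbmatrix}a&b\\0&d\end{sbmatrix} = \begin{sbmatrix}0&0\\0&g_1(w)d\end{sbmatrix}$, whose $1,2$ entry vanishes; and $\begin{sbmatrix}a&b\\0&d\end{sbmatrix}\begin{sbmatrix}g_2(v)&0\\0&0\end{sbmatrix} = \begin{sbmatrix}ag_2(v)&0\\0&0\end{sbmatrix}$, $1,2$ entry zero. So $D(x) = 0$, and since the diagonal entries $\theta_v(x), \theta_w(x)$ are whatever they are, $\rho(x)$ is diagonal. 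Good.

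The remaining "if" direction — given $E^1_{v,w} \ne \mt$, construct a genuine nest representation with nonzero $1,2$ entry — is where the real work lies. I would pick an edge $e_0 \in E^1_{v,w}$, choose a neighbourhood $U_0 \ni e_0$ on which $s$ is a homeomorphism onto a neighbourhood of $w$ and $r$ is injective, pick a contraction $\ft_0 \in \rC_c(E^1)$ supported in $U_0$ with $\ft_0(e_0) = 1$, and define on $\X(E)$ the completely contractive map $t(x) = \begin{sbmatrix} 0 & x(e_0) \\ 0 & 0 \end{sbmatrix}$ together with $\pi(f) = \diag(f(v), f(w))$. One checks the covariance relations: $t(f \cdot x)(e_0) = f(r(e_0))x(e_0) = f(v)x(e_0)$ matches $\pi(f)t(x)$, and $t(x \cdot f)(e_0) = x(e_0)f(s(e_0)) = x(e_0)f(w)$ matches $t(x)\pi(f)$. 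Complete contractivity of $t$ follows because $\|t^{(k)}\| \le \sup_v \|[\,\cdot\,]\|$-type estimates reduce to evaluation at the single point $e_0$, which is dominated by the $\X(E)$-norm via the inner product formula; this is the one routine-but-delicate estimate. Then $(\pi, t)$ extends by the universal property to a representation $\rho$ of $\T(E)_+$ into the upper triangular $2\times 2$ matrices; it is a nest representation precisely because $\rho(\ft_0)$ has $1,2$ entry $1 \ne 0$, so $\bC e_1$ is the unique proper invariant subspace. Evidently $\rho \in \Rep^d_{v,w}(E)$, so $\Rep_{v,w}(E) \ne \mt$. I expect the main obstacle to be verifying complete contractivity of $t$ cleanly — one must show that for a matrix $[x_{ij}]$ over $\X(E)$, the norm of $[t(x_{ij})]$ is controlled, and this requires relating the operator-space norm on matrices over $\X(E)$ (coming from the Hilbert module structure, i.e.\ $\|\sum_{ij} \lip x_{ij}, x_{ij}\rip\|^{1/2}$-type expressions) to pointwise evaluation at $e_0$; the cleanest route is probably to factor $t$ through the point-evaluation $\X(E) \to \bC$ at $e_0$ composed with a fixed nilpotent embedding $\bC \hookrightarrow T_2$, checking the point-evaluation is completely contractive when $\X(E)$ is viewed as a column Hilbert module over the fibre.
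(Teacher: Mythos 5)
Your construction of the nest representation from a single edge $e_0 \in E^1_{v,w}$, and your splitting $x = g_1\cdot x_1 + x_2\cdot g_2$ with $g_1(v)=0$ and $g_2(w)=0$ for the second assertion, are essentially the paper's own arguments (the paper uses a convex combination $\sum_{e\in E^1_{v,w}}\lambda_e x(e)$, of which your point evaluation is the case $\lambda_{e_0}=1$; the complete contractivity you worry about is dispatched in one line because a contraction into a one-dimensional corner of $\mathbb{M}_2$ is automatically completely contractive, and your estimate $|x(e_0)|^2 \le \ip{x,x}(w) \le \|x\|^2$ gives contractivity). The genuine gap is in your reduction of the ``only if'' direction: you claim that any $\rho \in \Rep_{v,w}(E)$ restricts to a diagonal representation of $\rC_0(E^0)$ because its $1,2$ entry is a vanishing point derivation. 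This is false when $v \neq w$. The $1,2$ entry is a $(\delta_v,\delta_w)$-derivation $D$, i.e.\ $D(fg) = f(v)D(g) + D(f)g(w)$, and every scalar multiple of $\delta_v - \delta_w$ satisfies this identity (indeed, choosing $u$ with $u(v)=1$, $u(w)=0$ and using commutativity shows these are the only ones: $D = D(u)(\delta_v-\delta_w)$). Your factorization trick $g = g_1g_2$ with both factors vanishing at the point only kills derivations when the two boundary characters coincide ($v=w$); for $v\ne w$ the derivation is a nonzero \emph{inner} one, so $\rho$ need not lie in $\Rep^d_{v,w}(E)$.

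The repair is exactly what the paper does: since $D = c(\delta_v - \delta_w)$ is inner, conjugating $\rho$ by a suitable invertible upper triangular (indeed unipotent) matrix produces an element of $\Rep^d_{v,w}(E)$ --- this is the single-representation instance of Lemma~\ref{L:diag}, which the paper invokes at this point. Your second assertion then applies to the conjugated representation: if $E^1_{v,w}=\mt$ it is diagonal on all of $\X(E)$, hence on all of $\T(E)_+$, so it has two proper invariant subspaces; since similarity induces a bijection on invariant subspaces, the original $\rho$ also has two proper invariant subspaces and is not a nest representation, giving the contradiction. With that similarity step inserted in place of your ``derivations vanish'' claim, your argument coincides with the paper's proof.
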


\begin{proof}
First assume that $E^1_{v,w}$ is non-empty.
Let $\lambda_e$ for $e \in E^1_{v,w}$ be complex scalars
such that $0 < \sum |\lambda_e| \le 1$.
Define a $*$-homomorphism $\pi:\rC_0(E^0) \to \mathbb{M}_2$
by
\[
 \pi(f)= \begin{bmatrix} f(v)&0 \\ 0&f(w)\end{bmatrix}
\]
and a map $t:\X(E) \to \mathbb{M}_2$ by
\[
 t(x)= \begin{bmatrix}
 0& \sum_{e \in E^1_{v,w}} \lambda_e x(e) \\
 0&0  \end{bmatrix}
\]
This is contractive (as it is a convex combination of contractive maps).
Thus $t$ is completely contractive because it's $1$-dimensional.

Clearly
\[
 t(f \cdot x) = \sum \lambda_e f(r(e))x(e) = \pi(f)t(x)
\]
and
\[
 t(x \cdot f) = \sum \lambda_e x(e) f(s(e)) = t(x)\pi (f) .
\]
Hence by the universal property of the tensor algebra,
this pair of representations extends to a completely contractive representation
$\rho$ of $\T (E)_+$.
The range of $\rho$ is the entire upper triangular $2 \times 2$ matrices
because some $\lambda_e$ is non-zero.
If $x \in \X(E)$ satisfies $\supp(x) \cap E^1_{v,w} = \mt$,
then $\rho$ is diagonal.  So $\rho$ belongs to $\Rep^d_{v,w}(E)$.

Conversely suppose that $\Rep_{v,w}(E)$ is non-empty.
Then applying a similarity (as in Lemma~\ref{L:diag} below) yields an element of  $\Rep^d_{v,w}(E)$.
Let $x \in \X(E)$ such that $\supp(x) \cap E^1_{v,w}= \mt$.
We will prove that $\rho(x)$ is diagonal for every $\rho$ in $\Rep^d_{v,w}(E)$.
Arguing as in Lemma~\ref{L:1}, we can split $x = g_1\cdot x_1 + x_2 \cdot g_2$
so that $g_1(v)=0$ and $g_2(w) = 0$.
Thus $\rho(x)$ has the form
\begin{align*}
 \rho(x) &= \rho(g_1) \rho(x_1) + \rho(x_2) \rho(g_2) \\ &=
 \begin{bmatrix} 0&0 \\ 0& * \end{bmatrix} \begin{bmatrix} *&* \\ 0& * \end{bmatrix} +
 \begin{bmatrix} *&* \\ 0& * \end{bmatrix} \begin{bmatrix} *&0 \\ 0& 0 \end{bmatrix}
 = \begin{bmatrix} *&0 \\ 0& * \end{bmatrix}
\end{align*}

If $E^1_{v,w} $ were empty and $\rho \in \Rep^d_{v,w}(E)$,
then $\rho(x)$ is diagonal for every  $x \in \X(E)$.
Thus $\rho$ is not a nest representation, contrary to assumption.
Hence $E^1_{v,w}$ is not empty.
\end{proof}

We also need a technical result which is a minor modification of \cite[Lemma~3.13]{DK}
using $\sigma=r_Es_{E \vert L}^{-1}$.

\begin{lemma}\label{L:diag}
Let $E=(E^0,E^1,s,r)$ be a topological graph.
Let $L \subset E^1$, $K \subset E^0$ two compact subsets such that
$s_E|_L$ maps $L$ onto $K$ homeomorphically.
Set $\sigma = r_E s_E|_L^{-1}$.
Let $\Omega$ be a domain in $\bC^d$.
Suppose that there exists a map $\rho$ from $K \times \Omega$ to
$\Rep \T (E)_+$ such that
\begin{enumerate}[\em(a)]%
\item $\rho(x,z) \in \Rep_{\sigma(x),x}$,
\item $\rho$ is continuous in the point-norm topology,
\item for each $x \in K$, $\rho(x,z)$ is analytic in $z \in \Omega$.
\end{enumerate}
Then there exists a map $A$ from $K \times \Omega$ onto the group of
invertible upper triangular matrices, so that
\begin{enumerate}[\em(1)]
\item $A(x,z)\rho(x,z)(\cdot)A(x,z)^{-1} \in \Rep^d_{\sigma(x),x}$,
\item $A$ is continuous on $K\bsl \{ x :  \sigma(x)=x \} \times \Omega$,
\item $A(x,z) = I_2$ when $\sigma(x)=x$,
\item for each $x \in K$, $A(x,z)$ is analytic in $z \in \Omega$,
\item $\max \big\{ \| A(x,z) \|,~\| A(x,z)^{-1} \| \big\} \leq 1 + \| \rho(x,z) \|$.
\end{enumerate}
\end{lemma}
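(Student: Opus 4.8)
The plan is to diagonalize each $\rho(x,z)$ on $\rC_0(E^0)$ by an explicit unipotent conjugation. By hypothesis (a), for $f\in\rC_0(E^0)$ we may write
\[
 \rho(x,z)(f)=\begin{bmatrix} f(\sigma(x)) & \ell_{x,z}(f) \\ 0 & f(x)\end{bmatrix},
\]
and, since $\rho(x,z)$ is multiplicative, $\ell_{x,z}\colon\rC_0(E^0)\to\bC$ is a bounded $(\delta_{\sigma(x)},\delta_x)$-point derivation, with $|\ell_{x,z}(f)|\le\|\rho(x,z)\|\,\|f\|$ because the $(1,2)$ entry of a matrix is dominated by its norm. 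The goal is to choose a scalar $a(x,z)$ so that $A(x,z)=\begin{bmatrix} 1 & a(x,z) \\ 0 & 1\end{bmatrix}$ conjugates $\rho(x,z)$ to something diagonal on $\rC_0(E^0)$, and then to check that $a$ has the required regularity and size.

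The one nontrivial input is the following elementary fact about point derivations of the commutative C*-algebra $\rC_0(E^0)$: if $v=w$, every bounded $(\delta_v,\delta_v)$-derivation vanishes (a commutative C*-algebra has no nonzero bounded point derivation, since $\ell(f^2)=2f(v)\ell(f)$ and one reduces to positive $f$, which have square roots vanishing at $v$); and if $v\ne w$, every bounded $(\delta_v,\delta_w)$-derivation $\ell$ has the form $\ell(g)=c\,(g(v)-g(w))$, where $c=\ell(h)$ for any $h\in\rC_0(E^0)$ with $h(v)=1$ and $h(w)=0$ — this comes out of evaluating the identity $\ell(hg)=\ell(gh)$ — and the constant $c$ does not depend on the choice of such $h$. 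Consequently we may \emph{unambiguously} set $a(x,z)$ to be this constant for $\ell_{x,z}$ when $\sigma(x)\ne x$, and $a(x,z)=0$ when $\sigma(x)=x$; in the latter case $\ell_{x,z}=0$, so $\rho(x,z)$ is already diagonal on $\rC_0(E^0)$.

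With $A(x,z)$ defined as above, a direct $2\times2$ computation gives $A(x,z)\rho(x,z)(f)A(x,z)^{-1}=\operatorname{diag}(f(\sigma(x)),f(x))$ for all $f\in\rC_0(E^0)$; since conjugation by an invertible upper triangular matrix preserves upper-triangularity, the two diagonal characters, and the invariant subspace $\bC e_1$, the conjugated representation lies in $\Rep^d_{\sigma(x),x}$, giving (1), while (3) holds by construction (and (1) is consistent there since $\ell_{x,z}=0$). For (5), given $(x,z)$ with $\sigma(x)\ne x$ choose, using that $E^0$ is Hausdorff, a Urysohn function $h$ with $0\le h\le1$, $h(\sigma(x))=1$ and $h\equiv0$ near $x$; then $\|h\|=1$ and $a(x,z)=\ell_{x,z}(h)$ is an entry of $\rho(x,z)(h)$, so $|a(x,z)|\le\|\rho(x,z)(h)\|\le\|\rho(x,z)\|$, whence $\max\{\|A(x,z)\|,\|A(x,z)^{-1}\|\}\le1+|a(x,z)|\le1+\|\rho(x,z)\|$ (and $=1$ on the fixed locus).

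It remains to establish (2) and (4), and this is where the auxiliary function must be chosen \emph{locally}. Note $\sigma=r_Es_E|_L^{-1}$ is continuous and $\{x\in K:\sigma(x)=x\}$ is closed (preimage of the diagonal of $E^0\times E^0$), so the domain in (2) is open. Fix $(x_0,z_0)$ with $\sigma(x_0)\ne x_0$; pick $h\in\rC_0(E^0)$ with $h\equiv1$ on a neighbourhood of $\sigma(x_0)$ and $h\equiv0$ on a neighbourhood of $x_0$. By continuity of $\sigma$ there is a neighbourhood $N$ of $x_0$ in $K$ with $h(\sigma(x))=1$ and $h(x)=0$ for all $x\in N$, so on $N\times\Omega$ we have $a(x,z)=\ell_{x,z}(h)=$ the $(1,2)$ entry of $\rho(x,z)(h)$. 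Hypothesis (b) makes this jointly continuous on $N\times\Omega$, and hypothesis (c) makes it analytic in $z$ for each fixed $x$; as $(x_0,z_0)$ was arbitrary, (2) and (4) follow. The expected main obstacle is precisely the behaviour near the fixed-point set $\{\sigma(x)=x\}$: there no single auxiliary function $h$ works on a whole neighbourhood and $\|A\|$ may genuinely blow up, which is why continuity is only asserted off this set, and why in (5) one must bound $|a(x,z)|$ directly through a norm-one Urysohn function rather than through the (unbounded, as $x$ approaches the fixed locus) patching functions used for (2) and (4).
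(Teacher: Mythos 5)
Your proof is correct and follows essentially the same route as the source the paper relies on: the paper gives no proof of this lemma, deferring to \cite[Lemma~3.13]{DK}, whose argument is exactly your computation that the $(1,2)$ entry restricted to $\rC_0(E^0)$ is a bounded point derivation, hence zero when $\sigma(x)=x$ and of the form $c\,(g(\sigma(x))-g(x))$ otherwise, followed by conjugation with the unipotent matrix built from $c$. Your treatment of (2) and (4) via locally chosen Urysohn functions, and of (5) via a norm-one separating function, supplies the regularity and norm estimates in the same way.
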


Now we define a notion which extends the notion of piecewise conjugacy
introduced in \cite{DK}.  To paraphrase, local conjugacy means that the
base spaces are homeomorphic and there is an open cover of the base spaces
which lift to local intertwining of the two graphs.
{}From now on, we will need to assume that the edge space is compact.

\begin{definition}
Let $E=(E^0,E^1,s_E,r_E)$ and $F=(F^0,F^1,s_F,r_F)$ be two compact topological graphs.
They are said to be \textit{locally conjugate} if there exists a homeomorphism
$\tau:E^0 \to F^0$ such that for every $u \in E^0$, there is a neighbourhood $U$ of $u$,
a homeomorphism $\gamma$ of $s_E^{-1}(U)$ onto $s_F^{-1}(\tau U)$ such that
$s_F \gamma = \tau s_E|_{s_E^{-1}(U)}$ and $r_F \gamma = \tau r_E|_{s_E^{-1}(U)}$.
\end{definition}

\begin{remark}
While we will only deal with compact topological graphs in this paper,
if $E^1$ and $F^1$ are not compact, we would modify the definition of
local conjugacy to say:
there exists a  homeomorphism
$\tau:E^0 \to F^0$ such that for every $v \in E^0$, and for every open set
$O$ in $E^0$ such that $\ol{O}$ is compact,
there is a neighbourhood $V$ of  $v$,
a homeomorphism $h_{V}$ from $r_E^{-1}(O) \cap s_E^{-1}(V)$ onto
$r_F^{-1}(\tau(O)) \cap s_F^{-1}(\tau(V))$ such that
$s_Fh_{V}=\tau s_E$ and $r_Fh_{V}=\tau r_E$.
\end{remark}

To prove the classification result, we need an equivalence relation between edges.
Two edges $e,e' \in E^1$ are said to be equivalent if there exist $U$
neighbourhood of $e$, $U'$ neighbourhood of $e'$ such that $s_E$
restricted to both $U$ and $U'$ is a homeomorphism and
$s_E(U)=s_E(U')$ and for every $v \in s(U)$,
$r_Es_E|_U^{-1}(v))=r_Es_E|_{U'}^{-1}(v))$.
In this case, we denote $e \sim e'$.
When there is a homeomorphism $\tau:E^0 \to F^0$, this equivalence relation
naturally extends to an equivalence relation between edges of $E^1$ and $F^1$ (relative to $\tau$).

The proof follows the lines of \cite[Theorem~3.22]{DK}
with some additional technicalities.

\begin{theorem}\label{T:main}
Let $E=(E^0,E^1,s_E,r_E)$ and $F=(F^0,F^1,s_F,r_F)$ be two
compact topological graphs.
If $\T (E)_+$ and $\T (F)_+$ are algebraically isomorphic, then
$E$ and $F$ are locally conjugate.
\end{theorem}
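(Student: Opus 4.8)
The plan is to mimic the structure of \cite[Theorem~3.22]{DK}, extracting geometric data about $E$ and $F$ from the algebraic isomorphism $\alpha:\T(E)_+ \to \T(F)_+$ by testing it against the two families of representations developed above: characters and $2$-dimensional nest representations. First I would note that an algebraic isomorphism of tensor algebras is automatically continuous (indeed completely bounded), since these are semisimple Banach algebras and the isomorphism is bounded by standard automatic-continuity arguments; this lets me transport analytic structure. Then, since $\alpha^*$ carries the character space $\M(F)$ bijectively onto $\M(E)$ and preserves maximal analytic sets by Theorem~\ref{T:characters}, Corollary~\ref{C:base space} gives a homeomorphism $\tau:E^0 \to F^0$ (after identifying $E^0$, $F^0$ with the quotients of $\M(E)$, $\M(F)$ by collapsing the closures of maximal analytic sets). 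Moreover $\alpha^*$ must match up the dimensions of the balls, so $n^E_v = n^F_{\tau(v)}$ for all $v$, where $n^E_v = |E^1_v|$.

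Next I would use the nest representations. For $v,w \in E^0$, Lemma~\ref{L:key} says $\Rep_{v,w}(E)$ is nonempty exactly when $E^1_{v,w}\neq\mt$; since $\alpha$ induces a bijection between $\Repn(E)$ and $\Repn(F)$ that respects which $\M_v$ the diagonal entries lie in (via $\tau$), we get $E^1_{v,w}\neq\mt \iff F^1_{\tau(v),\tau(w)}\neq\mt$. The finer point is to count: I would fix $v$, look at the local model near $E^1_v$ given by the contractions $\ft_1,\dots,\ft_n$ from the proof of Theorem~\ref{T:characters}, and for each nearby pair $(\sigma(x),x)$ with $x$ in a small neighbourhood, examine how $\alpha$ acts on the off-diagonal (derivation) entry of a nest representation in $\Rep^d_{\sigma(x),x}$. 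As in \cite{DK}, the off-diagonal entry, as a function of the analytic parameters coming from the balls $\bB_{n_v}$ on one side and $\bB_{n_w}$ on the other, is a nonzero linear map; comparing the two linear maps across $\alpha$ forces the multiplicity data and the local branch structure of $r_E \circ (s_E|_L)^{-1}$ to agree with that of $F$, up to a permutation of edges that may vary locally. This is precisely where Lemma~\ref{L:diag} enters: it lets me normalise the nest representations to be diagonal on $\rC_0(E^0)$ in a way that is continuous and analytic in the parameters and trivial where $\sigma(x)=x$, so that the comparison of derivations is meaningful on a whole neighbourhood rather than pointwise.

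Having matched the local edge data, I would then assemble the local conjugacy. Fix $u \in E^0$. Using compactness of $E^1$ and the local homeomorphism property of $s_E$, choose a neighbourhood $U$ of $u$ small enough that $s_E^{-1}(U)$ splits as a disjoint union of sheets $L_1,\dots,L_k$ each mapped homeomorphically onto $U$ by $s_E$, with corresponding maps $\sigma_j = r_E (s_E|_{L_j})^{-1}$, and similarly $s_F^{-1}(\tau U)$ splits into sheets $M_1,\dots,M_k$ with maps $\tau_j$. The representation-theoretic comparison above shows that, possibly after shrinking $U$, there is a bijection between the sheets matching $\sigma_j$ with $\tau \circ \tilde\sigma_j \circ \tau^{-1}$ for the appropriate reindexing; the homeomorphism $\gamma$ on $s_E^{-1}(U)$ is then defined sheetwise as $\gamma|_{L_j} = (s_F|_{M_{\pi(j)}})^{-1} \circ \tau \circ s_E|_{L_j}$, and the two intertwining identities $s_F\gamma = \tau s_E$, $r_F \gamma = \tau r_E$ hold by construction. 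The main obstacle, and the source of the "additional technicalities" alluded to in the excerpt, is controlling the local permutation of sheets: in the purely multivariable-dynamics setting of \cite{DK} the edge space is a trivial bundle $\{1,\dots,n\}\times X$ with globally defined maps, whereas here $s_E$ is only a local homeomorphism, so the sheets are not globally labelled and one must argue — using connectedness of small neighbourhoods and the continuity/analyticity in Lemma~\ref{L:diag} — that the matching of sheets is locally constant and that the derivation-entry computation genuinely detects the branch $r_E(s_E|_{L_j})^{-1}$ rather than some mixture. Once this is handled, local conjugacy follows.
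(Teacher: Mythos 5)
Your overall strategy is the paper's: characters and maximal analytic sets give the homeomorphism $\tau$ of base spaces (Corollary~\ref{C:base space}), nest representations detect edges (Lemma~\ref{L:key}), Lemma~\ref{L:diag} normalises families of transported nest representations, and the local intertwining $\gamma$ is assembled sheetwise at the end. But the step that actually does the work --- showing that over each $v_0$ the number of sheets of $s_E$ carrying a given local range map equals the number of sheets of $s_F$ carrying it --- is asserted rather than proved: ``comparing the two linear maps across $\alpha$ forces the multiplicity data \dots\ to agree'' is not an argument. The paper's mechanism is as follows. Fix the equivalence class $\{e_1,\dots,e_k\}\cup\{f_1,\dots,f_l\}$ of an edge in $s_E^{-1}(v_0)\cup s_F^{-1}(v_0)$ (equivalence meaning the maps $r_E s_E|_{U}^{-1}$, $r_F s_F|_{V}^{-1}$ agree on a full neighbourhood of $v_0$), assume $k>l$, build the analytic family $\rho_{v,z}$, $z\in\bC^k$, of nest representations of $\T(E)_+$ with off-diagonal entry $\sum_i z_i\delta_{e_i}$, transport it through the isomorphism, diagonalise via Lemma~\ref{L:diag} to get $\Phi(v,z)$, and read off an analytic map $\Psi:\bB_k\to\bC^q$ from the $(1,2)$ entries against test functions $\ft_j$ supported near each $f_j$. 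One then proves $\psi_j\equiv 0$ for $j>l$; this requires choosing a net $v_\lambda\to v_0$ along which $r_Fs_F|_{V_j}^{-1}(v_\lambda)\ne\sigma(v_\lambda)$, applying Lemma~\ref{L:key} there, and passing to the limit, with a separate case analysis according to whether $\sigma(v_0)=v_0$, because the similarities $A(v,z)$ of Lemma~\ref{L:diag} are only continuous off the fixed-point set. Finally, since an analytic map $\bC^k\to\bC^l$ with $l<k$ vanishing at $0$ has no isolated zero, there is $z_0\ne 0$ at which $\Phi(v_0,z_0)$ is diagonal, contradicting that it is a nest representation; hence $k=l$. None of this appears in your sketch, and it is the heart of the proof.

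Two smaller points. The analytic parameters in this family are indexed by the edges of $s_E^{-1}(v_0)$ lying in a fixed equivalence class, not by the balls $\ol{\bB}_{n_v}$ of Theorem~\ref{T:characters}, which parametrise characters via loops at $v$; conflating the two obscures what $\Psi$ is. Your appeal to semisimplicity for automatic continuity is unjustified (these tensor algebras are in general not semisimple), and your proposed use of ``connectedness of small neighbourhoods'' to control the sheet permutation cannot work either, since the base space may be totally disconnected; the paper's equivalence relation on edges and the net argument are designed precisely to avoid any connectedness hypothesis.
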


\begin{proof}
Let $\gamma$ be an isomorphism of $\T (E)_+$ onto $\T (F)_+$.
This induces a canonical map $\gamma_c$ from $\M(E)$ onto $\M(F)$
between the character spaces.
It also induces a map $\gamma_r$ from $\Repn(E)$ onto $\Repn(F)$.

Since $\M(E)$ is endowed with the weak-$*$ topology, it is
easy to see that $\gamma_c$ is continuous.
Indeed, if $\theta_\alpha$ is a net in $\M(E)$ converging to $\theta$
and $B \in \T(F)_+$, then
\[
 \lim_\alpha \gamma_c \theta_\alpha (B) = \lim_\alpha \theta_\alpha(\gamma^{-1}(B))
 = \theta(\gamma^{-1}(B)) = \gamma_c \theta(B) .
\]
The same holds for $\gamma_c^{-1}$.
So $\gamma_c$ is a homeomorphism.

Observe that $\gamma_c$ carries analytic sets to analytic sets.
Indeed, if $\Theta$ is an analytic function of a domain $\Omega$
into $\M(E)$, then
\[ \gamma_c \Theta(z)(B) = \Theta(z)(\gamma^{-1}(B)) \]
is analytic for every $B \in \T(F)_+$; and thus $\gamma_c \Theta$ is analytic.
Since the same holds for $\gamma^{-1}$, it follows that $\gamma_c$
takes maximal analytic sets to maximal analytic sets.
Thus it carries their closures, $\M(E)_u$, onto sets the corresponding sets $\M(F)_v$.

By Corollary~\ref{C:base space}, $\gamma_c$ induces a homeomorphism
$\tau$ of $E^0$ onto $F^0$.
For simplicity of notation, we identify $E^0$ and $F^0$ via $\tau$,
so that $F^0=E^0$ and $\tau = \id$.

Fix $v_0 \in E^0$. Since $s_E$ and $s_F$ are local homeomorphisms
and $E^1$ and $F^1$ are compact,  both $s_E^{-1}(v_0)=\{e_1,\dots,e_p\}$
and $s_F^{-1}(v_0)=\{f_1,\dots,f_q\}$ are finite sets.
Let $\G$ denote the equivalence class of $e_1$ among $\{e_1,\dots,e_p,f_1,\dots,f_q\}$.
After relabeling
\[
 \G=\{e_1,\dots,e_k,f_1,\dots,f_l :  e_1 \sim \dots \sim e_k \sim f_1 \sim \dots \sim f_l \}.
\]
By exchanging $E$ and $F$ if necessary, we can suppose that $k \ge l$.
Our purpose is to prove that $k=l$.
So, suppose that $k > l$.

As in \cite[Lemma 1.4]{K}, there exists a neighbourhood $W$ of $v_0$,
$U_i$ disjoint neighbourhoods of $e_i$ ($1 \le i \le p$), $V_j$ disjoint
neighbourhoods of $f_j$ ($1 \le j \le q$) such that $s_E(U_i)= W = s_F(V_j)$
homeomorphically, and
\[
 s_E^{-1}(W) = \bigcup_{i=1}^p U_i
 \qand
 s_F^{-1}(W) = \bigcup_{j=1}^q V_j
\]
and 
\[
 \sigma := r_E s_E|_{U_1}^{-1}  =
 r_E s_E|_{U_i}^{-1}  =
 r_F s_F|_{V_j}^{-1} \qfor 1 \le i \le k,\ 1 \le j \le l.
\]
If $\sigma(v_0) \ne v_0$, we also choose $W$ so that
$\sigma (\ol{W}) \cap \ol{W}=\mt$.

For each $v \in W$ and $z=(z_1,\dots,z_k)$ in the $\bC^k$,
define a representation in $\Rep^d_{w,v}(E)$ by
\[
 \rho_{v,z}=
 \begin{bmatrix}
  \delta_{\sigma (v)} & \sum_{i=1}^k z_i\delta_{e_i} \\
  0 & \delta_v
 \end{bmatrix}
\]
This is a continuous family of nest representations which is analytic
in the second variable.  Here $\rho_{v,z}$ is completely contractive
if $\|z\|_2 \le 1$ and bounded by $1+\|z\|_2$ in general.

Let $\eta_{v,z} := \gamma_r \rho_{v,z} = \rho_{v,z} \circ \gamma^{-1}$.
This is a nest representation in $\Rep_{\sigma (v),v}(F)$ with
$\| \eta(x,z) \| \le (1 + \|z\|_2)\|\gamma^{-1}\|$.
Apply Lemma \ref{L:diag} to obtain a matrix valued function $A(v,z)$ such that
\[
 \Phi(v,z)=A(v,z) \eta_{v,z}A(v,z)^{-1}
 \in \Rep^d_{w,v}(F)
\]
which is continuous on $W\bsl \{ x :  \sigma(x)=x \} \times \bC^k$,
analytic in the second variable, and such that
\[ \max \big\{ \| A(x,z) \|,~\| A(x,z)^{-1} \| \big\} \leq 1 + \| \eta(x,z) \| .\]

Now for $1 \le j \le q$, pick a contraction $\ft_j \in \C_c(F^1)$
with $\supp(\ft_j) \subset V_j$ and a compact neighbourhood
$\tilde V_j \subset V_j$ of $f_j$ such that $\ft_j|_{\tilde V_j} = 1$.
Define $\psi_j(z)$ to be the $(1,2)$ entry of $\Phi(v_0,z)(\ft_j)$, and let
\[
 \Psi(z)=(\psi_1(z),\dots,\psi_q(z)) .
\]
This is an analytic function from unit ball of $\bC^k$ into $\bC^q$.

We claim that $\psi_j(z)=0$ for $j>l$.
If $j>l$, then $f_j$ is not in $\G$.
Hence there is a net $(v_\lambda) \in W$ converging to $v_0$ such that
\[
 r_F s_F|_{ V_j}^{-1}(v_\lambda) \ne \sigma(v_\lambda) .
\]
Note that
\[
 \supp(\ft_j) \cap s_F^{-1}(v_\lambda) \subset V_j \cap s_F^{-1}(v_\lambda)
 = \{ s_F|_{V_j}^{-1}(v_\lambda) \}.
\]
Therefore,
$\supp(\ft_j) \cap F^1_{\sigma(v_\lambda),v_\lambda} = \mt$.
Hence by Lemma~\ref{L:key}, $\Phi(v_\lambda,z)(\ft_j)$ is diagonal
for all $\lambda$ and all $z\in \bC^k$.

Now there are two cases.
If $\sigma(v_0) \ne v_0$, then $A$ is continuous
(since $\sigma (\ol{W}) \cap \ol{W}=\mt$).
Hence $\Phi(v,z)$ is point-norm continuous.
Taking a limit in $\lambda$, we obtain that $\Phi(v_0,z)(\ft_j)$ is diagonal,
whence $\psi_j(z)=0$.

In the second case, $\sigma(v_0) = v_0$.
Thus the diagonal elements of the range of $\Phi(v_0,z)$ are scalars.
For $z$ fixed, the function $A(v,z)$ and its inverse are bounded.
Therefore we can pass to subnet so that $A(z) := \lim_\lambda A(v_\lambda,z)$
exists in the the group of invertible upper triangular matrices.
As $\eta_{v,z}=\rho_{v,z} \circ \gamma^{-1}$ is point-norm continuous and
$A(v_0,z)=I_2$, passing to the limit we deduce that
$A(z)\eta_{v,z}(\ft_j)A(z)^{-1}$ is diagonal, hence scalar.
So $\Phi(v_0,z)(\ft_j)$ is scalar, which means that $\psi_j(z)=0$.

We may now consider $\Psi$ as a function from $\bC^k$ into $\bC^l$.
We have $\Psi(0) = 0$.  As in \cite[Proposition~3.21]{DK}, it follows
that this cannot be an isolated zero when $l<k$.
Thus there is some $z_0\ne 0$ such that $\Psi(z_0) = 0$.
As in the proof of Theorem~\ref{T:characters}, every $x \in \X(F)$
can be written as a sum $x= x_0 + \sum_{j=1}^q \ft_j \cdot f_j$
where $x_0$ vanishes on a neighbourhood of $s_F^{-1}(v_0)$
and $f_j \in \rC_0(F^0)$.
It follows from Lemma~\ref{L:key} that $\Phi(v_0,z_0)(x_0)$ is diagonal.
As each $\Phi(v_0,z_0)(\ft_j) \Phi(v_0,z_0)(f_j)$ is also diagonal,
we deduce that $\Phi(v_0,z_0)$ is diagonal---and hence is not a nest representation.
This contradiction shows that $l=k$.

Therefore we can partition $\{e_1,\dots,e_p\}$ so they agree on a common
neighbourhood $W$ of $v_0$, and these equivalence classes will paired with a
corresponding partition of $\{f_1,\dots,f_q\}$.
In particular, $q=p$.
After reordering, we may suppose that $e_i \sim f_i$ for $1 \le i \le p$.
Thus there are disjoint neighbourhoods $U_i$ of
of $e_i$ and $V_i$ of $f_i$ for $1 \le i \le p$
such that $s_E(U_i)=W=s_F(V_j)$ homeomorphically,
\[
 s_E^{-1}(W) \subset \bigcup_{i=1}^p U_i
 \qand
 s_F^{-1}(W) \subset \bigcup_{j=1}^q V_j ,
\]
and $r_Es_E|_{U_i}^{-1} = r_Fs_F|_{V_i}^{-1}$.
The map $\gamma_i = s_F|_{V_i}^{-1} s_E|_{U_i}$ is a homeomorphism
of $U_i$ onto $V_i$ so that
\[
 s_F \gamma_i = s_E|_{U_i} \qand r_F \gamma_i = r_E|_{U_i} .
\]
Hence the map $\gamma = \bigcup_{i=1}^p \gamma_i$
is the desired homeomorphism of $s_E^{-1}(W)$ onto $s_F^{-1}(W)$
establishing local equivalence of $E$ and $F$.
\end{proof}

\section{The converse}

In this section, we discuss the converse of Theorem \ref{T:main},
namely, if two topological graphs $E$ and  $F$ are locally conjugate then
are their associated tensor algebras isomorphic (as Banach algebras)?
We are able to prove this converse when $E^0$ or $F^0$ has topological
dimension less or equal to 1 and $E^1$ and $F^1$ are compact.
In this case, the tensor algebras are completely isometrically isomorphic.

We begin by constructing a particularly good open cover of $E$.

\begin{definition}
Let $E$ and $F$ be two locally conjugate compact topological graphs
with $\dim(E^0) \leq 1$ via a homeomorphism $\tau$.
An open cover $\U=(U_i)_{i \leq n}$ of $E^0$ is called \textit{admissible} if:
\begin{enumerate}[(C1)]
\item For each $i$, there is an integer $m(i)$ and disjoint open sets $U_{ik}$
 of $E^1$ for $1 \le k \le m(i)$ such that $s_E^{-1}(U_i) = \bigcup_{k=1}^{m(i)} U_{ik}$ and
 $s_E$ maps each $U_{ik}$ homeomorphically onto $U_i$.
\item For each $1\le i \le n$, there is a homeomorphism $\gamma_i$ from
 $s_E^{-1}(U_i)$ onto $s_F^{-1}(\tau U_i)$ such that
 $s_F \gamma_i = \tau s_E|_{s_E^{-1}(U_i)}$ and
 $r_F \gamma_i = \tau r_E|_{s_E^{-1}(U_i)}$.
\item $U_i \cap U_j \cap U_k=\emptyset$, whenever $i \neq j \neq k \neq i$.
\item $\ol{U}_i \bsl U_j$ and $\ol{U}_j \bsl U_i$ are disjoint for $i \ne j$.
\item If $U_i \cap U_j \ne \mt$, then there is a permutation $\pi \in S_{m(i)}$
so that $U_{ik} \cap U_{jl} \ne \mt$ if and only if $l = \pi(k)$.
\item The corresponding open cover of $F^0$ and $F^1$ obtained
by setting $V_i = \tau(U_i)$ and $V_{ik} = \gamma_i(U_{ik})$ also
satisfies (C5).
\end{enumerate}
\end{definition}

Note that $\{U_{ik} : 1 \le i \le n,\, 1 \le k \le m(i) \}$ is an open cover of $E^1$.
The property (C1) is easily achieved in general.
It implies that $m(i)=m(j)$ if $U_i$ and $U_j$ intersect.
Thus this is a locally constant function.

(C2) is just a restatement of local equivalence.
The fact that $\gamma_i$ intertwines
the source maps $s_E$ and $s_F$ and
takes $U_{ik}$ onto $V_{ik}$ means that
\[
 \gamma_i |_{U_{ik}} = (s_F|_{V_{ik}})^{-1} s_E|_{U_{ik}}
 \qfor 1 \le i \le n \AND 1 \le k \le m .
\]

Property (C3) is an expression of $\dim(E^0) \le 1$.
Properties (C4) and (C5) may be achievable in general, but our proof
will use the low dimension in a key way.

Property (C5) means that $U_{ik}$ intersects
exactly one of the $U_{jl}$, not some more complicated mixing.
This seems essential in order to be able to understand what is happening.
If the sets $U_i \cap U_j$ were connected, then this would be automatic.
But in the dimension 0 case, that can't happen.

The open cover of $F^0$ and $F^1$ obtained as defined in (C6)
is easily seen to satisfy (C1,3,4).  However property (C5) is not
automatically transferred.

\begin{theorem}\label{T:topo}
Suppose that $E$ is compact topological graph where the covering
dimension of $E^0$ is at most 1 and that $E$ is locally conjugate to
another topological graph $F$ via a homeomorphism $\tau$.
Then there exists an admissible open cover $(U_i)_{1 \le i \le n}$
of $E^0$.
\end{theorem}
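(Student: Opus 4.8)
The plan is to build the cover in stages, refining a finite cover of $E^0$ by sets that already carry the structure demanded by (C1) and (C2), and then using the hypothesis $\dim(E^0)\le 1$ together with a careful shrinking argument to arrange (C3)--(C6). First I would start from local conjugacy: for each $u\in E^0$ choose a neighbourhood $W_u$ and a homeomorphism $\gamma_u$ as in the definition of local conjugacy, and simultaneously shrink $W_u$ so that property (C1) holds there (this is exactly the local structure of the local homeomorphism $s_E$ near the finite fibre $s_E^{-1}(u)$, as in \cite[Lemma~1.4]{K}). Passing to a finite subcover of $E^0$, I obtain a finite cover $(W_i)$ satisfying (C1) and (C2); the constancy of $m(i)$ on overlaps is then automatic since $s_E$ is a local homeomorphism onto a connected-enough piece. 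Up to this point nothing beyond compactness and local conjugacy is used.

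Next I would invoke the covering dimension hypothesis. Since $\dim(E^0)\le 1$ and $E^0$ is compact metric (it embeds in the compact Hausdorff $F^0$ via $\tau$, and one may as well work with a compatible metric), every finite open cover has a finite open refinement of order at most $2$, i.e.\ with no three sets meeting; this gives (C3). I would take such a refinement $(U_i)$ of $(W_i)$, with each $U_i$ contained in some $W_{j(i)}$, so that (C1) and (C2) are inherited by restricting the sets $U_{ik}$ and the homeomorphisms $\gamma_i$. Then, using normality of $E^0$, I would shrink the $U_i$ slightly to new open sets still covering $E^0$ whose closures satisfy (C4): the standard trick is to shrink the cover so that $\ol{U}_i\subset \bigcup_{j}\{U_j : U_j\cap U_i\ne\mt\}$-type control holds, and more precisely to choose the $U_i$ so that for $i\ne j$ the closed sets $\ol{U}_i\bsl U_j$ and $\ol{U}_j\bsl U_i$ are separated --- this is where $\dim\le 1$ (hence (C3)) keeps the bookkeeping finite and the separations simultaneously achievable.

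For (C5), I would use the fact that $s_E$ restricted to each $U_{ik}$ and to each $U_{jl}$ is a homeomorphism onto $U_i$ and $U_j$ respectively, so over the overlap $U_i\cap U_j$ the set $s_E^{-1}(U_i\cap U_j)$ decomposes in two ways into $m$ disjoint graphs of sections of $s_E$; matching them up over each point of $U_i\cap U_j$ defines, pointwise, a bijection between the two labellings, which is locally constant, hence constant on each connected component of $U_i\cap U_j$. In general $U_i\cap U_j$ need not be connected, so this only gives a permutation per component; the remedy --- and here the low dimension is essential --- is to further refine so that each pairwise intersection $U_i\cap U_j$ is connected (or, failing that, to split $U_i$ into pieces indexed by the components of its intersections, which the order-$2$ condition (C3) keeps manageable), forcing a single permutation $\pi\in S_{m(i)}$ as required. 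Finally, (C6) is obtained by the same connectedness/refinement argument applied on the $F$ side to the transported cover $(V_i,V_{ik})$: transport preserves (C1),(C3),(C4) automatically, and one refines once more so that the $V_i\cap V_j$ are connected, which yields (C5) for $F$ and, tracing back through $\tau$ and the $\gamma_i$, does not disturb the properties already secured on the $E$ side.

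I expect the main obstacle to be reconciling (C5) on both $E$ and $F$ simultaneously: refining to make $U_i\cap U_j$ connected on the $E$ side may destroy connectedness of $V_i\cap V_j$ on the $F$ side and vice versa, since $\tau$ is merely a homeomorphism of base spaces and the $\gamma_i$ are only defined locally and need not agree on overlaps. The resolution is to iterate the refinement a bounded number of times --- bounded because (C3) caps the combinatorial complexity (each set meets at most one other, so the ``overlap graph'' is a matching) --- and to check that each refinement step preserves all previously established properties; making this iteration terminate, and verifying that the restrictions of the $\gamma_i$ to the refined sets still satisfy (C2), is the technical heart of the argument.
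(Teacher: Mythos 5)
Your treatment of (C1)--(C4) follows the paper's route: shrink the local-conjugacy neighbourhoods to trivialize $s_E$ over them, take a finite subcover, refine to order $2$ using $\dim E^0\le 1$, and then shrink again (the paper does this with an explicit partition of unity $f_i$ and the sets $\{f_i>1/3\}$) to separate $\ol{U}_i\bsl U_j$ from $\ol{U}_j\bsl U_i$. That part is sound in outline.

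The genuine gap is in your approach to (C5) and (C6). You propose to refine the cover so that each $U_i\cap U_j$ is connected; but this is impossible in general -- the paper explicitly flags that in the dimension $0$ case (e.g.\ a totally disconnected $E^0$) no nonempty open set is connected, so no refinement can achieve this. Your fallback of splitting $U_i$ by the connected components of its intersections also fails: components need not be open and there may be uncountably many of them. You actually have the right observation in hand -- the matching between the two labellings of $s_E^{-1}(U_i\cap U_j)$ is \emph{locally constant} -- but the correct move is to use the level sets of that locally constant function, which are finitely many disjoint \emph{open} sets, not the connected components. Moreover, the paper resolves the $E$-versus-$F$ reconciliation problem you identify not by iterating (your proposed iteration is not shown to terminate, and refining on the $F$ side does perturb the $E$ side since $V_i=\tau(U_i)$), but in a single step: it partitions $U_i\cap U_j$ by the \emph{common} refinement $\{s_E(U_{ik}\cap U_{jl}),\ \tau^{-1}s_F(V_{ik}\cap V_{jl})\}$ into disjoint open pieces $Y_r$ on which both matchings are simultaneously constant, then uses the Urysohn function from (C4) to shrink $U_i$ and $U_j$ until they are disjoint and to insert the truncated pieces $Y'_r=\{u\in Y_r: .3<f(u)<.7\}$ as new members of the cover. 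Each $Y'_r$ then meets exactly one sheet over $U_i$ and one over $U_j$, giving (C5) and (C6) at once, after which one re-runs the (C4) shrinking. Without the level-set partition and the simultaneous common refinement, your argument does not close.
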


\begin{proof}
The definition of local equivalence shows that for each $u \in E^0$,
there is an open neighbourhood $U$ and a homeomorphism $\gamma$
from  $s_E^{-1}(U)$ onto $s_F^{-1}(\tau U)$ such that
$s_F \gamma = \tau s_E|_{s_E^{-1}(U)}$ and
$r_F \gamma_i = \tau r_E|_{s_E^{-1}(U)}$.
Now $s_E^{-1}(u)$ is finite, say $\{e_j : 1 \le j \le m\}$, and $s_E$ is a local
homeomorphism.
Thus there is an open set $\tilde U \subset U$ containing $u$
and disjoint open sets $V_j$ containing $e_j$ so that
$s_E|_{V_j}$ is a homeomorphism onto $\tilde U$.
Let $U' = \tilde U \bsl s_E(E^1 \bsl \bigcup_{j=1}^m V_j )$.
This is open, contains $u$, and $s_E^{-1}(U') = \bigcup_{j=1}^m V'_j$
where $V'_j = s_E^{-1}(U') \cap V_j$, and $s_E$ maps each $V'_j$
homeomorphically onto $U'$.
The restriction of $\gamma$ to $s_E^{-1}(U')$ is the desired intertwining map.

By the compactness of $E^0$, there is a finite subcover $(U'_i)_{i \leq n}$
of sets constructed in the previous paragraph.
Thus this cover satisfies (C1) and (C2).
Since $E^0$ has covering dimension at most 1, this cover can
be further refined so that (C3) holds as well.

Now we make another modification to obtain (C4).
Let $f_i$ be a partition of unity in $\rC(E^0)$ such that $\supp(f_i) \subset U'_i$.
Observe that at most two of the $f_i(u)$ can be non-zero for any $u \in E^0$.
Thus for each $u\in E^0$, there is some $i$ with $f_i(u) > 1/3$.
Set $U_i = \{u : f_i(u) > 1/3\}$.
This is a finer cover of $E^0$ with $\ol{U_i} \subset U'_i$.

Fix $i \ne j$, and consider
$u \in \big( \ol{U_i} \bsl U_j \big) \cap \big( \ol{U_j} \bsl U_i \big)$.
Then
\[ \tfrac1 3 \le f_i(u) \le \tfrac1 3 \qand \tfrac1 3 \le f_j(u) \le \tfrac1 3 .\]
So $f_i(u)=f_j(u) = 1/3$ and $f_p(u) = 0$ for $i \ne p \ne j$.
This contradicts $\sum f_i = 1$, so the intersection is empty
and (C4) holds and properties (C1-3) are not affected.

Suppose that $U_i \cap U_j \ne \mt$.
By Tietze's extension theorem, there is a continuous
function $f$ in $\rC(E^0,[0,1])$ such that $f|_{\ol{U_i} \bsl U_j}=1$
and $f|_{\ol{U_j} \bsl U_i}=0$.

For each $1 \le k,l \le m(i)$, let $W_{kl} = U_{ik} \cap U_{jl}$.
Observe that for fixed $k$, $U_{ik} \cap s_E^{-1}(U_i\cap U_j)$
is the disjoint union of $W_{kl}$ for $1 \le l \le m(i)$.
So $\{ s_E(W_{kl}) : 1 \le l \le m(i) \}$ partitions $U_i\cap U_j$
into disjoint open sets.
Thus $\{ s_E(W_{kl}) : 1 \le k, l \le m(i) \}$ partitions $U_i\cap U_j$
into disjoint open sets.
Likewise the sets $V_{ik} = \gamma_i(U_{ik})$ and $V_{jl} = \gamma_j(U_{jl})$
determine open sets $X_{kl} = V_{ik} \cap V_{jl}$.
Thus we obtain a partition of $U_i \cap U_j$ into disjoint open
sets $\{ Y_r : 1 \le r \le r(i,j) \}$ determined by
\[ \{ s_E(W_{kl}), \tau^{-1} s_F(X_{kl}) : 1 \le k, l \le m(i) \}  .\]

Using the function $f$ constructed above, define
\[
 U'_i = \{ u \in U_i : f(u) > .6 \} \qand
 U'_j = \{ u \in U_j : f(u) < .4 \},
\]
and set
\[
 Y'_r = \{ u \in Y_r : .3 < f(u) < .7 \} \FOR 1 \le r \le r(i,j).
\]
Replace $U_i$ and $U_j$ in the cover by these new sets.
For each $r \le r(i,j)$ and $k \le m(i)$,
set $Y_{rk} = s_E^{-1}(Y'_r) \cap U_{ik}$.
Note that if $i \ne p \ne j$, then $U_p \cap U'_i = U_p \cap U_i$,
$U_p \cap U'_j = U_p \cap U_j$ and $U_p \cap Y'_r = \mt$.
In addition, $U'_i \cap U'_j = \mt$.
It is clear by construction that $U_{ik} \cap W_{rl}$ is empty except for $l=k$.
Likewise $W_{rk} \cap U_{jl}$ is empty except for a single choice of $l = \pi(k)$.
The same argument shows that the corresponding cover of $F$ satisfies (C5).

Treat all intersecting pairs $U_i$ and $U_j$ from the original cover in this
manner, and the new cover constructed satisfies (C5).  This does not affect
(C1-3).  If necessary, go through the construction to get (C4) again.
Reducing each open set does not affect (C1-3,5).  So we are done.
\end{proof}

It is clear that $m(i)=m(j)$ if $U_i$ and $U_j$ intersect.
So $E^0$ splits into a finite number of clopen subsets on which
$m(i)$ is constant.
Clearly it suffices to deal which each set separately.
So we may suppose that $m(i)=m$ is constant.

Let $\U = (U_i)_{1 \le i \leq n}$ be an admissible open cover of $E^0$.
Note that if $U_i \cap U_j \neq \emptyset$, then there is a unique
permutation $\pi^E_{i,j} \in S_m$ defined by $\pi^E_{i,j}(k)=l$
if and only if $U_{ik} \cap U_{jl} \neq \emptyset$.
We denote $\Pi_{E,\U}$ the function defined
on a subset of $\{1,\dots, n\} \times \{1,\dots, n\}$
by $\Pi_{E,\U}(i,j)=\pi^E_{i,j}$ when $U_i$ and $U_j$ intersect.
Obviously, $\pi^E_{j,i}=(\pi^E_{i,j})^{-1}$.

First we record an easy lemma to set the stage.

\begin{lemma} \label{L:id}
Suppose that $E$ and $F$ are two locally conjugate compact topological graphs,
and that $E^0$ has dimension at most 1.
Let $\U = (U_i)_{1 \le i \leq n}$ denote an admissible open cover
of $E^0$ with $m(i)=m$ for $1 \le i \le n$, and let $\V$ be the corresponding
cover for $F$.
We assume that $\Pi_{E,\U}=\Pi_{F,\V}$.
Then $\X(E)$ and $\X(F)$ are unitarily equivalent as C*-correspondences;
and $\T(E)_+$ and $\T(F)_+$ are completely isometrically isomorphic.
\end{lemma}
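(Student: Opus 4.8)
The plan is to build an explicit unitary from $\X(E)$ to $\X(F)$ using the local conjugacy together with the hypothesis $\Pi_{E,\U}=\Pi_{F,\V}$, and then deduce the tensor algebra statement formally. First I would fix, once and for all, a partition of unity $(p_i)_{1\le i\le n}$ on $E^0$ subordinate to $\U$, and pull it back to a partition of unity on $E^1$ by setting $p_{ik}=p_i\circ s_E$ on $U_{ik}$ and $0$ elsewhere; this makes sense because $s_E^{-1}(U_i)=\bigsqcup_k U_{ik}$. Given $x\in\rC_c(E^1)$ (which is dense in $\X(E)$), decompose $x=\sum_{i,k}\sqrt{p_{ik}}\,x$ and transport each piece via the homeomorphism $\gamma_i|_{U_{ik}}\colon U_{ik}\to V_{ik}$, defining $(Wx)(f) = \sum_i \sqrt{p_i(s_F(f))}\,x(\gamma_i^{-1}(f))$ for $f$ in the appropriate $V_{ik}$. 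One should check that $W$ is well defined (independent of which $U_{ik}$ one uses when $f$ lies in an overlap) --- and this is exactly where $\Pi_{E,\U}=\Pi_{F,\V}$ enters: on $V_{ik}\cap V_{jl}$ with $\pi^F_{i,j}(k)=l$, the equality of permutations forces $\gamma_i^{-1}$ and $\gamma_j^{-1}$ to have the same target sheet $U_{ik}\cap U_{jl}$, so the two prescriptions for $Wx$ differ only by the factors $\sqrt{p_i\circ s_F}$ versus $\sqrt{p_j\circ s_F}$, which after summing over all relevant pairs combine using $\sum_i p_i = 1$.

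Next I would verify that $W$ is a $\rC_0(E^0)$-correspondence isomorphism. The intertwining relations $s_F\gamma_i=\tau s_E$ and $r_F\gamma_i=\tau r_E$ (identifying $E^0$ and $F^0$ via $\tau$) show immediately that $W(g\cdot x\cdot h) = g\cdot (Wx)\cdot h$ for $g,h\in\rC_0(E^0)$: the left action reads off $r$, the right action reads off $s$, and $\gamma_i$ respects both. For the inner product, one computes $\ip{Wx,Wy}(v)=\sum_{f\in s_F^{-1}(v)}\overline{(Wx)(f)}(Wy)(f)$; each $f\in s_F^{-1}(v)$ lies in a unique $V_{ik}$ (for those $i$ with $v\in V_i$), and the sum reorganizes, using $\sum_i p_i(v)=1$ and the fact that $\gamma_i$ matches $s_E^{-1}(v)$ bijectively with $s_F^{-1}(v)$, to $\sum_{e\in s_E^{-1}(v)}\overline{x(e)}y(e)=\ip{x,y}(v)$. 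Hence $W$ is isometric for the correspondence norm, extends to all of $\X(E)$, and a symmetric construction using $\gamma_i^{-1}$ gives the inverse, so $W$ is a unitary of C*-correspondences over the common base $\rC_0(E^0)$.

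Finally, the tensor algebra conclusion is a soft consequence. A unitary $W\colon\X(E)\to\X(F)$ of C*-correspondences over $\rC_0(E^0)$ induces unitaries $W^{\otimes n}\colon\X(E)^n\to\X(F)^n$ for each $n$, hence a unitary $U=\id\oplus\bigoplus_{n\ge1}W^{\otimes n}$ of the Fock spaces $\F(E)\to\F(F)$ which implements a $*$-isomorphism $\L(\F(E))\to\L(\F(F))$ carrying $\pi^E_\infty(f)$ to $\pi^F_\infty(f)$ and $t^E_\infty(x)$ to $t^F_\infty(Wx)$; one reads this off by chasing the definitions of $\pi_\infty$ and $t_\infty$ on elementary tensors. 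Restricting this $*$-isomorphism to $\T(E)_+=\ol{\spn}\{t^E_\infty{}^n(x)\}$ maps it onto $\T(F)_+$, and since it is spatially implemented it is completely isometric. Alternatively, and even more cheaply, one invokes the universal property from Section~2: precomposing a covariant representation of $\X(F)$ with $(\id, W)$ gives a covariant representation of $\X(E)$ and vice versa, so the two universal objects are completely isometrically isomorphic.

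The main obstacle is the well-definedness of $W$ on the overlaps $V_{ik}\cap V_{jl}$, i.e.\ bookkeeping the partition-of-unity weights against the matching of sheets; the hypothesis $\Pi_{E,\U}=\Pi_{F,\V}$ is precisely the combinatorial input that makes this work, and property (C3) (triple intersections empty) keeps the overlap analysis to a manageable pairwise level. Everything after that --- the correspondence axioms and the passage to tensor algebras --- is routine verification.
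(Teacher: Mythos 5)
Your overall strategy is the paper's: use the hypothesis $\Pi_{E,\U}=\Pi_{F,\V}$ to see that the local intertwiners $\gamma_i$ are compatible on overlaps, transport functions from $E^1$ to $F^1$, and then pass to the tensor algebras by the standard Fock-space (or universal-property) argument. Your key observation --- that on $V_{ik}\cap V_{jl}$ with $\pi^F_{i,j}(k)=l=\pi^E_{i,j}(k)$ the maps $\gamma_i^{-1}$ and $\gamma_j^{-1}$ land in the same sheet $U_{ik}\cap U_{jl}$, hence (since both intertwine the source maps and $s_F$ is injective on each sheet) coincide --- is exactly what the paper uses, and your final paragraph on lifting a correspondence unitary to a complete isometry of tensor algebras is fine.

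However, the partition-of-unity weights as you have written them break the argument. First, $x=\sum_{i,k}\sqrt{p_{ik}}\,x$ is false, since $\sum_{i,k}\sqrt{p_{ik}}\ne 1$ in general. More seriously, because the points $\gamma_i^{-1}(f)$ all \emph{coincide} on overlaps, your formula collapses to
\[
(Wx)(f)=\Big(\sum_{i}\sqrt{p_i(s_F(f))}\Big)\,x(\gamma^{-1}(f)),
\]
and $\big(\sum_i\sqrt{p_i}\big)^2=1+2\sum_{i<j}\sqrt{p_ip_j}>1$ wherever two of the $p_i$ are simultaneously nonzero; so $W$ is not isometric, and the claimed reorganization of $\ip{Wx,Wy}$ ``using $\sum_i p_i=1$'' does not go through (the cross terms do not vanish --- they add constructively precisely because the $\gamma_i$ agree). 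The fix is immediate and is what the paper does: drop the partition of unity altogether. Since the $\gamma_i$ agree on overlaps they glue to a single homeomorphism $\gamma=\bigcup_i\gamma_i$ of $E^1$ onto $F^1$ satisfying $s_F\gamma=\tau s_E$ and $r_F\gamma=\tau r_E$, and one simply sets $\Gamma(x)=x\circ\gamma^{-1}$. With that definition the bimodule and inner-product verifications you sketch are correct as stated, so your proposal is a correct proof modulo replacing the weighted sum by the glued map.
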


\begin{proof}
The map $\gamma = \bigcup_{1 \le i \le n} \gamma_i$ is a
well defined homeomorphism of $E^1$ onto $F^1$ because
the assumption $\Pi_{E,\U}=\Pi_{F,\U}$ means that $\gamma_i$
and $\gamma_j$ agree on $s_E^{-1}(U_i\cap U_j)$.
Moreover $s_F \gamma = \tau s_E$ and
 $r_F \gamma = \tau r_E$.
This induces a unitary isomorphism $\Gamma$ from $\X(E)$
onto $\X(F)$ given by $\Gamma(x) = x \circ \gamma^{-1}$.
To see that $\Gamma$ is a bimodule map, note that
$\tau$ induces a $*$-isomorphism $\rho$ of $\rC(E^0)$
onto $\rC(F^0)$ by $\rho(f) = f\circ \tau^{-1}$.  Moreover,
$\tau^{-1} s_F = s_E \gamma_i^{-1}$ and $\tau^{-1} r_F = r_E \gamma_i^{-1}$.
Thus if $g\in\rC(E^0)$ and  $x \in \X(E)$, then
\begin{align*}
 \Gamma(x\cdot g) &= (x \circ\gamma_i^{-1}) (g\circ s_E \gamma_i^{-1}) \\&=
 \Gamma(x)  (g\circ\tau^{-1}s_F) = \Gamma(x)\cdot \rho(g) .
\end{align*}
Similarly, $\Gamma(g\cdot x) = \rho(g)\cdot \Gamma(x)$.
Finally this map is isometric because if $v\in E^0$ and
$s_E^{-1}(v) = \{ e_i : 1 \le i \le m \}$, then
\[ s_F^{-1}(\tau v) = \{ f_i = \gamma(e_i) : 1 \le i \le m \} .\]
So
\begin{align*}
 \ip{\Gamma(x),\Gamma(x)}(\tau v) &= \sum_{i=1}^m |\Gamma(x)(f_i)|^2
 \\&= \sum_{i=1}^m |x(e_i)|^2 =  \ip{x,x}(v)
\end{align*}
Thus $\ip{\Gamma(x),\Gamma(x)} = \rho( \ip{x,x})$.
It follows that this is a unitary equivalence of C*-correspondences.
Hence $\T(E)_+$ and $\T(F)_+$ are completely isometrically isomorphic.
\end{proof}

Now we modify this proof to get the key step in the proof of the converse.

\begin{lemma}\label{L:flip}
Suppose that $E$ and $F$ are two locally conjugate compact topological graphs,
and that $E^0$ has dimension at most 1.
Let $\U = (U_i)_{1 \le i \leq n}$ denote an admissible open cover
of $E^0$ with $m(i)=m$ for $1 \le i \le n$, and let $\V$ be the corresponding
cover for $F$.
We assume that $\Pi_{E,\U}=\Pi_{F,\V}$ where they're defined,
except for a single pair $i_0,j_0$ at which the permutations differ
by a single transposition $(k_0l_0)$:
\[
 \pi^E_{i_0,j_0} = \pi^F_{i_0,j_0} \circ (k_0l_0) .
\]
Then $\X(E)$ and $\X(F)$ are unitarily equivalent as C*-correspondences.
\end{lemma}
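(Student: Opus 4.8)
The plan is to mimic the proof of Lemma~\ref{L:id}, building a unitary $\Gamma$ from $\X(E)$ onto $\X(F)$ out of the local homeomorphisms $\gamma_i$, but now the $\gamma_i$ no longer patch together to a single homeomorphism of $E^1$ onto $F^1$: on $s_E^{-1}(U_{i_0}\cap U_{j_0})$ the maps $\gamma_{i_0}$ and $\gamma_{j_0}$ disagree, because over that intersection $\gamma_{i_0}$ sends $U_{i_0 k_0}$ to $V_{j_0 l_0}$ while $\gamma_{j_0}$ sends it to $V_{j_0,\pi^F_{i_0 j_0}(k_0)}$. The fix is to absorb the discrepancy into a scalar phase. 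First I would choose, using (C4) and Tietze's theorem, a continuous function $h\colon E^0\to[0,1]$ with $h=0$ on $\ol U_{i_0}\bsl U_{j_0}$ and $h=1$ on $\ol U_{j_0}\bsl U_{i_0}$, so that $\lambda := e^{i\pi h}$ is a unimodular function on $E^0$ equal to $1$ on $U_{i_0}\bsl U_{j_0}$ and to $-1$ on $U_{j_0}\bsl U_{i_0}$. Then over $U_{i_0}$ I would use $\gamma_{i_0}$ but twist the fiber coordinate indexed by $k_0$ by the scalar $\lambda\circ r_E$ (equivalently $\lambda\circ s_E$, since I only care about values, and I will check which is forced by the bimodule identities), while over $U_{j_0}$ I would use $\gamma_{j_0}$ untwisted; over all other $U_i$ I would use $\gamma_i$ untwisted. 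The point is that on the overlap $U_{i_0}\cap U_{j_0}$ the two prescriptions now send the $k_0$-fiber to the \emph{same} target fiber $V_{j_0 l_0}$ (resp.\ the other relabelled fiber) up to the scalar $\lambda$, and on $\ol U_{j_0}\bsl U_{i_0}$ the twist factor has smoothly become $-1$, which matches whatever gluing the $\gamma_j$ for $j\ne i_0$ impose there. Properties (C3) and (C5)---each $U_{ik}$ meeting exactly one $U_{jl}$, and no triple intersections---are exactly what guarantees that this single scalar correction is consistent everywhere and does not propagate into further clashes.

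Concretely, I would define $\Gamma\colon \X(E)\to\X(F)$ fiberwise: for $x\in\X(E)$ and $f\in s_F^{-1}(V_i)$ lying in $V_{ik}$, set $(\Gamma x)(f) = \mu_{ik}(f)\, x(\gamma_i^{-1}(f))$, where $\mu_{ik}$ is identically $1$ except $\mu_{i_0 k_0}$ (and the correspondingly relabelled fiber over $U_{j_0}$) carries the factor $\lambda$ pulled back appropriately. The verification then has three parts: (i) $\Gamma$ is well defined, i.e.\ the fiberwise definitions agree on overlaps $V_{ik}\cap V_{jl}$---this is the combinatorial heart and is where the choice of $h$ and properties (C3),(C5) are used; (ii) $\Gamma$ is a bimodule map over the $*$-isomorphism $\rho(g)=g\circ\tau^{-1}$ of $\rC(E^0)$ onto $\rC(F^0)$, which holds because $\lambda$ is a scalar-valued function of the base point and so commutes past the right action and can be pushed through the left action using $\tau^{-1} r_F=r_E\gamma_i^{-1}$, exactly as in Lemma~\ref{L:id}; (iii) $\Gamma$ preserves the $\rC(E^0)$-valued inner product, which is immediate because $|\mu_{ik}|=1$, so $\ip{\Gamma x,\Gamma x}(\tau v)=\sum_{i}|\mu(f_i)|^2|x(e_i)|^2=\sum_i|x(e_i)|^2=\ip{x,x}(v)$. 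A parallel computation gives $\Gamma^{-1}$, so $\Gamma$ is a unitary equivalence of C*-correspondences.

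The main obstacle is step (i), the well-definedness of $\Gamma$ on overlaps, which requires genuinely using the dimension hypothesis. The issue is this: over a point $u\in U_{i_0}\cap U_{j_0}$ I have the $\gamma_{i_0}$-picture and the $\gamma_{j_0}$-picture of the fiber, and I must check that the twisted $\gamma_{i_0}$ picture equals the untwisted $\gamma_{j_0}$ picture; but I also might have $u\in U_p$ for some third index $p$, in which case I'd need the $\gamma_p$ picture to match too, and $\gamma_p$ relative to $\gamma_{i_0}$ and relative to $\gamma_{j_0}$ are governed by $\pi^E_{p,i_0}$ and $\pi^E_{p,j_0}$, whose compatibility with the transposition $(k_0l_0)$ is not automatic. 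Property (C3), $U_i\cap U_j\cap U_k=\emptyset$ for distinct $i,j,k$, kills this: over any point at most two of the $U$'s are relevant, so I never have to reconcile three pictures at once. What remains is a two-page-at-most check that on $U_{i_0}\cap U_{j_0}$ the transposition is exactly cancelled by the relabelling built into $\gamma_{j_0}$ versus $\gamma_{i_0}$, and that on the ``boundary strips'' $\ol U_{i_0}\bsl U_{j_0}$ and $\ol U_{j_0}\bsl U_{i_0}$ the scalar $\lambda$ has reached $1$ and $-1$ respectively so that $\Gamma$ extends continuously; (C4), which makes those two strips disjoint, is what makes such an $h$ exist. Once that bookkeeping is done, (ii) and (iii) are routine modifications of Lemma~\ref{L:id}.
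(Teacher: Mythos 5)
Your overall scaffolding (define $\Gamma$ fibrewise over the cover, check well-definedness on overlaps, then the bimodule and inner-product identities) has the right shape, and your uses of (C3)--(C5) to localize the problem to a single pair of overlapping charts and to produce the cutoff function are exactly how the paper uses them. But the central device --- a diagonal scalar twist $\mu_{i_0k_0}=\lambda$ with $\lambda=e^{i\pi h}$ running from $1$ to $-1$ --- cannot work, and the step you defer as ``two-page-at-most bookkeeping'' is precisely where it breaks. Take a point $a\in V_{i_0k_0}\cap V_{j_0l}$ with $l=\pi^F_{i_0,j_0}(k_0)$. Your two prescriptions there are $\mu_{i_0k_0}(a)\,x(\gamma_{i_0}^{-1}(a))$ and $\mu_{j_0l}(a)\,x(\gamma_{j_0}^{-1}(a))$. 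Since $\pi^E_{i_0,j_0}(k_0)\ne l$, the set $U_{i_0k_0}\cap U_{j_0l}$ is empty, so $\gamma_{i_0}^{-1}(a)$ and $\gamma_{j_0}^{-1}(a)$ are two \emph{distinct} edges in the fibre $s_E^{-1}(\tau^{-1}s_F(a))$; the values $x(\gamma_{i_0}^{-1}(a))$ and $x(\gamma_{j_0}^{-1}(a))$ are independent as $x$ ranges over $\X(E)$, and no choice of unimodular scalars reconciles them. The discrepancy between the $\gamma_{i_0}$- and $\gamma_{j_0}$-pictures over $U_{i_0}\cap U_{j_0}$ is a genuine transposition of two fibre coordinates, not a sign: in the $2\times 2$ block indexed by $k_0,l_0$ you must pass continuously from the identity (forced on $\ol{V}_{i_0}\bsl V_{j_0}$) to the permutation matrix $\left[\begin{smallmatrix}0&1\\1&0\end{smallmatrix}\right]$ (forced on $\ol{V}_{j_0}\bsl V_{i_0}$) while staying unitary, and your family $\diag(\lambda,1)$ stays diagonal forever. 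The transposition is not connected to the identity through diagonal unitaries (nor through permutations); it is connected to it through $U(2)$. (If a relabelling really did cancel the transposition, the hypothesis $\Pi_{E,\U}\ne\Pi_{F,\V}$ would be vacuous and Lemma~\ref{L:id} would already apply.)

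That connectedness is the actual content of the lemma and is how the paper proceeds: with $h=g\circ s_F$ for $g\in\rC(F^0,[0,\pi/2])$ equal to $0$ on $\ol{V}_{i_0}\bsl V_{j_0}$ and to $\pi/2$ on $\ol{V}_{j_0}\bsl V_{i_0}$, one sets, on the two offending intersections,
\[
 \Gamma(x)(a)=\cos(h(a))\,x(\gamma_{i_0}^{-1}(a))+\sin(h(a))\,x(\gamma_{j_0}^{-1}(a))
\]
and
\[
 \Gamma(x)(a)=e^{2ih(a)}\bigl(\cos(h(a))\,x(\gamma_{i_0}^{-1}(a))-\sin(h(a))\,x(\gamma_{j_0}^{-1}(a))\bigr),
\]
that is, a rotation genuinely mixing the two coordinates, with the phase $e^{2ih}$ correcting the determinant so that at $h=\pi/2$ one lands on the transposition rather than on $\left[\begin{smallmatrix}0&1\\-1&0\end{smallmatrix}\right]$. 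Pointwise unitarity of the rotation gives the inner-product identity (your step (iii) survives, with $\cos^2+\sin^2=1$ and cancelling cross terms in place of $|\mu|^2=1$), and since the coefficients are scalar functions of the base point the bimodule computation (your step (ii)) goes through unchanged. So the repair is local and surgical: keep your framework, but replace the diagonal phase by this rotation.
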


\begin{proof}
Let $\V = \{V_i = \tau U_i : 1 \le i \le n\}$ be the corresponding cover of $F^0$,
and let $V_{ik} = \gamma_i(U_{ik})$ for $1 \le i \le n$ and $1 \le k \le m$.
Since the labelling is arbitrary, we can suppose without
loss of generality that $i_0=k_0=1$ and
$j_0=l_0=2$ and that $\pi^E_{1,2}=id$  and so $\pi^F_{1,2} =(12)$;
while $\pi^F_{i,j} = \pi^E_{i,j}$ when $\{i,j\} \ne \{1,2\}$.
Thus the sets
$U_{11}\cap U_{21}$, $U_{12}\cap U_{22}$,
$V_{11} \cap V_{22}$ and $V_{12} \cap V_{21}$ are all non-empty.

Except for this one flip, we have that $V_{ik} \cap V_{jl} \ne \mt$
if and only if $U_{ik} \cap U_{jl} \ne \mt$ if and only if $\pi^E_{i,j}(k)=l$.
Thus
\[
 \gamma_i|_{U_{ik} \cap U_{jl}} =
 (s_F|_{V_{ik} \cap V_{jl}})^{-1} s_E|_{U_{ik} \cap U_{jl}} =
 \gamma_j|_{U_{ik} \cap U_{jl}} .
\]
To describe what happens on the remaining four intersections, let
\[
 \sigma = (s_F|_{V_{12} \cap V_{21}})^{-1} s_F|_{V_{11} \cap V_{22}}
\]
be the canonical homeomorphism of $V_{11} \cap V_{22}$
onto $V_{12} \cap V_{21}$.  Then
\[
 \gamma_1|_{U_{11}\cap U_{21}} =
 (s_F|_{V_{11} \cap V_{22}} )^{-1} s_E|_{U_{11}\cap U_{21}}
\]
whereas
\[
 \gamma_2|_{U_{11}\cap U_{21}} =
 (s_F|_{V_{12} \cap V_{21}} )^{-1} s_E|_{U_{11}\cap U_{21}} =
 \sigma \gamma_1|_{U_{11}\cap U_{21}} .
\]
Similarly, $\gamma_1|_{U_{12}\cap U_{22}}$ is a homeomorphism onto
$V_{12} \cap V_{21}$, while
\[
 \gamma_2|_{U_{12}\cap U_{22}} =
 \sigma^{-1} \gamma_1|_{U_{12}\cap U_{22}}
\]
is a homeomorphism onto $V_{11} \cap V_{22}$.

Using property (C4) and Tietze's extension theorem, there exists
a continuous function
$g \in \rC(F^0, [0,\pi/2])$ such that
$g|_{\ol{V}_1 \bsl V_2}=0$ and
$g|_{\ol{V}_2 \bsl V_1}=\pi/2$.
Let $h=g\circ s_F$.
For  $x \in \X(F)$ and $a \in F^1$, we define $\Gamma(x) \in \X(F)$
by setting $\Gamma(x)(a)$ to be
\begin{alignat*}{2}
 &x \gamma_i^{-1}(a)
    \hspace{4cm} a \in V_{ik} ,\ a \not\in V_{11} \cap V_{22} &\OR V_{12} \cap V_{21}
    \\[.5ex]
 &\cos(h(a)) x \gamma_1^{-1}(a)  + \sin(h(a)) x \gamma_2^{-1}(a)
    &a \in V_{11} \cap V_{22}\\[.5ex]
 &e^{2ih(a)} \big(\cos(h(a)) x \gamma_1^{-1}(a)  -
  \sin(h(a)) x \gamma_2^{-1}(a) \big)
    &a\in V_{12} \cap V_{21}
\end{alignat*}
By the calculations in the previous paragraph,
$\Gamma(x)$ is well defined on $F^1$.

It is easy to verify that $\Gamma(x)$ is a continuous function on $F^1$.
As before, there is an induced $*$-isomorphism $\rho$ between
$\rC(E^0)$ and $\rC(F^0)$ given by sending $f\in \rC(E^0)$ to
$\rho(f) = f \circ \tau^{-1}$.  The relations
$\tau^{-1} s_F = s_E \gamma_i^{-1}$ and $\tau^{-1} r_F = r_E \gamma_i^{-1}$
allow us to show that $\Gamma$ is a bimodule map:
if $f\in\rC(E^0)$, $x \in \X(E)$ and $a\in F^1\bsl
\big((V_{11} \cap V_{22}) \cap (V_{12} \cap V_{21}) \big) $, then
the argument is as in Lemma~\ref{L:id}.
While if $a \in V_{11} \cap V_{22}$, then
\begin{align*}
 \Gamma(x\cdot f)(a) &=
 \big(\cos(h(a)) x \gamma_1^{-1}(a)  + \sin(h(a)) x \gamma_2^{-1}(a) \big)
 f(s_E \gamma_i^{-1}(a)) \\&=
 \Gamma(x)(a) f(\tau^{-1}s_F(a)) = \Gamma(x)\cdot \rho(f) (a) .
\end{align*}
A similar calculation works on $V_{12} \cap V_{21}$, so $\Gamma$
is a right module map.  Similarly, it is a left module map.

Finally we show that $\Gamma$ is isometric.
We will prove that for every $v \in E^0$,
$\ip{\Gamma(x),\Gamma(x)}(\tau v) = \ip{x,x}(v)$.
It is clear from the definition of $\Gamma$ and the
argument in Lemma~\ref{L:id} that the only place a problem
might arise is when $v \in U_1 \cap U_2$.
If $s_E^{-1}(v) = \{ e_i : 1\le i \le m\}$, then
$s_F^{-1}(\tau v) = \{ f_i := \gamma_1(e_i): 1\le i \le m\}$.
So
\[
  \ip{x,x}(v) = \sum_{i=1}^m |x(e_i)|^2
\]
and
\begin{align*}
  \ip{\Gamma(x),\Gamma(x)}(\tau v) &=
  \sum_{i=1}^m |\Gamma(x)(\gamma_1(e_i))|^2  \\&=
  \sum_{i=1}^2 |\Gamma(x)(f_i)|^2 + \sum_{i=3}^m |x(e_i)|^2 .
\end{align*}
Thus we can ignore the terms $3 \le i \le m$.
We compute\\[1ex]
$|\Gamma(x)(f_1)|^2 + |\Gamma(x)(f_2)|^2 = $
\begin{align*}
   &=
\big| \cos(h(f_1)) x \gamma_1^{-1}(f_1)  + \sin(h(f_1)) x \gamma_2^{-1}(f_1) \big|^2
 \\& \quad +
 \big|e^{2ih(f_2)} \big(\cos(h(f_2)) x \gamma_1^{-1}(f_2)  -
  \sin(h(f_2)) x \gamma_2^{-1}(f_2) \big) \big|^2 \\ &=
 \cos^2(g(\tau v)) |x(e_1)|^2  + \sin(2g(\tau v)) \re \ol{x(e_1)}x(e_2)
  + \sin^2(g(\tau x)) |x(e_2)|^2
 \\& \quad +
 \cos^2(g(\tau v)) |x(e_2)|^2  - \sin(2g(\tau v)) \re \ol{x(e_1)}x(e_2)
 + \sin^2(g(\tau x)) |x(e_1)|^2
 \\&= |x(e_1)|^2 + |x(e_2)|^2
\end{align*}
Hence $\Gamma$ is a unitary.

It follows that $\X(E)$ and $\X(F)$ are unitarily equivalent as C*-corr\-es\-pondences.
\end{proof}

Now we make the routine extension to the general result.

\begin{theorem}\label{T:converse}
Let $E=(E^0,E^1,s_E,r_E)$ and $F=(F^0,F^1,s_F,r_F)$ be two compact
topological graphs with $\dim E^0 \le 1$.
Then the following are equivalent:
\begin{enumerate}[\em(i)]
\item $\T (E)_+$ and $\T (F)_+$ are algebraically isomorphic,
\item $E$ and $F$ are locally conjugate,
\item $\T (E)_+$ and $\T (F)_+$ are completely isometrically isomorphic.
\end{enumerate}
\end{theorem}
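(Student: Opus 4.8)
The implications (iii)$\Rightarrow$(i) and (i)$\Rightarrow$(ii) cost nothing: a complete isometry is in particular an algebra isomorphism, and (i)$\Rightarrow$(ii) is exactly Theorem~\ref{T:main} (whose proof does not use the dimension hypothesis). So the whole content is the converse (ii)$\Rightarrow$(iii). Assume $E$ and $F$ are locally conjugate via a homeomorphism $\tau$ and that $\dim E^0\le 1$. The plan is to produce an admissible open cover of $E^0$, reduce the problem to comparing two permutation-valued functions, and then pass from one to the other along a chain of single transpositions, each step realized by Lemma~\ref{L:flip}.

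First I would apply Theorem~\ref{T:topo} to obtain an admissible open cover $\U=(U_i)_{1\le i\le n}$ of $E^0$, together with the corresponding cover $\V$ of $F^0$. Since $i\mapsto m(i)$ is locally constant, after partitioning $E^0$ into the clopen pieces on which it is constant and treating each separately, we may assume $m(i)=m$ for all $i$. We then have two functions $\Pi_{E,\U}$ and $\Pi_{F,\V}$, both defined on the set of pairs $(i,j)$ with $U_i\cap U_j\ne\mt$, valued in $S_m$, and each satisfying $\Pi(j,i)=\Pi(i,j)^{-1}$. If they coincide, Lemma~\ref{L:id} finishes the proof.

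In general, the crucial point is property (C3): since $U_i\cap U_j\cap U_k=\mt$ for distinct $i,j,k$, there is no triple-overlap cocycle condition, so \emph{any} assignment $\Pi$ of permutations to the intersecting pairs with $\Pi(j,i)=\Pi(i,j)^{-1}$ is the permutation function of a genuine compact topological graph $F_\Pi$: its base space is $E^0$, its edge space is obtained by gluing, over each $U_i$, $m$ copies of $U_i$ with the $k$-th copy over $U_i$ identified with the $\Pi(i,j)(k)$-th copy over $U_j$ along $U_i\cap U_j$, and $s,r$ are inherited from $E$; the cover $\U$, transported by the gluing, is admissible for $F_\Pi$, since (C1)--(C4) depend only on $E^0$ and the common local structure while (C5)--(C6) hold because each $\Pi(i,j)$ is a bijection. (That this glued edge space is compact Hausdorff is routine.) Taking $\Pi=\Pi_{F,\V}$ reconstructs $F$, and $\Pi=\Pi_{E,\U}$ reconstructs $E$ up to the relabeling $\tau$. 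Now enumerate the intersecting pairs $(i_1,j_1),\dots,(i_N,j_N)$, and working one pair at a time adjust the current permutation function at $(i_t,j_t)$ until it agrees there with $\Pi_{E,\U}$: writing the quotient of the two permutations at $(i_t,j_t)$ as a product of transpositions, pass through the corresponding finite chain of graphs, whose consecutive members are locally conjugate (sharing base space and local structure), carry corresponding admissible covers, and whose permutation functions differ only at the single pair $(i_t,j_t)$ and only by one transposition. Modifying one pair never disturbs another, again because triple intersections are empty, so once a pair has been fixed it stays fixed. By Lemma~\ref{L:flip}, the C*-correspondences of consecutive graphs in each chain are unitarily equivalent; after all $N$ pairs are processed the current graph is $E$. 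Composing these equivalences yields a unitary equivalence of $\X(F)$ and $\X(E)$ as C*-correspondences, intertwined by the $*$-isomorphism $f\mapsto f\circ\tau^{-1}$ of $\rC(F^0)$ onto $\rC(E^0)$. Such an equivalence carries the Fock representation of $\X(F)$ to that of $\X(E)$ and hence induces a completely isometric isomorphism of $\T(F)_+$ onto $\T(E)_+$, giving (iii) and closing the cycle.

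The main obstacle is the bookkeeping for the intermediate glued objects: one must verify that every $\Pi$ arising in the chain genuinely yields a compact topological graph equipped with an admissible cover, so that Lemma~\ref{L:flip} is legitimately applicable at each step, and that the successive single-transposition modifications can be carried out without interference. This is precisely where the hypothesis $\dim E^0\le 1$ is used, through property (C3), which simultaneously removes any constraint on the gluing data and makes the modifications at distinct pairs independent of one another.
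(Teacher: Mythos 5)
Your proposal is correct and follows essentially the same route as the paper: obtain an admissible cover via Theorem~\ref{T:topo}, reduce to constant $m$ on clopen pieces, and connect $\Pi_{E,\U}$ to $\Pi_{F,\V}$ by a finite chain of single-transposition modifications, each realized as a unitary equivalence of C*-correspondences by Lemma~\ref{L:flip}, finishing with Lemma~\ref{L:id}. Your explicit gluing description of the intermediate graphs and the observation that (C3) removes any cocycle obstruction correspond to the paper's verification that $s_E\gamma_i^{-1}\gamma_j = s_E$ and $r_E\gamma_i^{-1}\gamma_j = r_E$ on the relevant intersections, so the two arguments are the same in substance.
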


\begin{proof}
We established (i) implies (ii) in Theorem~\ref{T:main}.
As (iii) clearly implies (i), it remains to establish (ii) implies (iii).

Let $\U$ be an admissible cover. Let us write
$\sigma_{i,j}= (\pi_{i,j}^F)^{-1} \pi_{i,j}^E$ for $1 \le i,j \leq n$.
Fix $k$ and set $\pi_{i,j}^E(k)=l$, $k'= \pi_{i,j}^{F\,-1}(l)$
and $l'=\pi_{i,j}^E(k')$; so that  $\sigma_{ij}(k)=k'$.
Then $\gamma_j$ carries $U_{ik}\cap U_{il}$ onto $V_{ik'}\cap V_{jl}$,
which is carried by $\gamma_i^{-1}$ onto $U_{ik'} \cap U_{jl'}$.
By the local equivalence (C2), it follows that
\[
 s_E \gamma_i^{-1}\gamma_j |_{U_{ik}\cap U_{il}} = s_E|_{U_{ik}\cap U_{il}}
 \AND
 r_E \gamma_i^{-1}\gamma_j |_{U_{ik}\cap U_{il}} = r_E|_{U_{ik}\cap U_{il}} .
\]
It follows that we can build a new topological graph by changing $\pi^E_{ij}$
by the transposition $(l,l')$ and leaving the rest the same.  That is, we
declare that $U_{ik}$ intersects $U_{jl'}$ and $U_{ik'}$ intersects $U_{jl}$,
leaving all other intersections unchanged.  This defines a new topological
graph, which by Lemma~\ref{L:flip} determines a C*-correspondence
which is unitarily equivalent to $\X(E)$.

Repeated use of this procedure a finite number of times yields
an equivalent topological graph $\tilde E$ for which $\Pi_{\tilde E,\U} = \Pi_{F,\V}$.
By Lemma~\ref{L:id}, the two C*-correspondences $\X(\tilde E)$ and $\X(F)$ are
unitarily equivalent.  It follows that $\X(E)$ and $\X(F)$ are unitarily equivalent,
whence $\T (E)_+$ and $\T (F)_+$ are completely isometrically isomorphic.
\end{proof}


\end{document}